\definecolor{forestgreen}{rgb}{0.13, 0.55, 0.13}
\definecolor{lightblue}{rgb}{0.68, 0.85, 0.9}
  \def\MR#1{\href{http://www.ams.org/mathscinet-getitem?mr=#1}{MR#1}}
\def\e{{\rm e}}
\def\ii{{\rm i}}
\def\dd{{\rm d}}
\def\C{{\mathbb C}}
\def\N{{\mathbb N}}
\def\P {{\mathbb P}}
\def\Re {{\mathbb R}}
\def\K{{\mathscr{K}}}
\def\D{{\mathscr{D}}}
\def\R{{\mathbb R}}
\DeclareMathOperator\Real {Re}
\definecolor{mygray}{rgb}{0.9,0.9,0.9}
\def\nst2{\| _*} 
\def\a12{A_h ^{1/2} } 
\def\tr|{|\! |\! |}
\def\e{\text {\rm e} }
\def\Co{{\mathbb C}}
\def\R {{\mathbb R}}
\def\P{{\mathbb P}}
\def\N{{\mathbb N}}
\def\N{{\mathbb N}}
\def\A{{\mathcal{A}}}
\def\B{{\mathcal{B}}}
\def\C{{\mathcal{C}}}
\def\D{{\mathcal{D}}}
\def\M{{\mathcal{M}}}
\def\K{{\mathscr{K}}}
\def \a{\alpha }
\def\T_h{{{\mathcal T}_h}}
\def\<{{\langle }}
\def\>{{\rangle }}
\begin{document}
\theoremstyle{plain}
\newtheorem{theorem}{Theorem}[section]
\newtheorem{lemma}{Lemma}[section]
\newtheorem{proposition}{Proposition}[section]
\newtheorem{corollary}{Corollary}[section]

\theoremstyle{definition}
\newtheorem{definition}[corollary]{Definition}

\newtheorem{example}{Example}[section]

\newtheorem{remark}{Remark}[section]
\newtheorem{remarks}[remark]{Remarks}
\newtheorem{note}{Note}
\newtheorem{case}{Case}

\numberwithin{equation}{section}
\numberwithin{table}{section}
\numberwithin{figure}{section}

\title[BDF methods for elliptic--parabolic systems]
{Implicit and implicit--explicit high-order BDF methods\\ for  coupled elliptic--parabolic systems}
\author[G. Akrivis]{Georgios Akrivis}
\address{Department of Computer Science and Engineering, University of Ioannina, 451$\,$10 Ioannina, Greece,
and Institute of Applied and Computational Mathematics, FORTH, 700$\,$13 Heraklion, Crete, Greece}
\email {\href{mailto:akrivis@cse.uoi.gr}{akrivis{\it @\,}cse.uoi.gr}}

\author[M. H. Chen]{Minghua Chen}
\address{School of Mathematics and Statistics, Gansu Key Laboratory of Applied Mathematics and Complex Systems,
Lanzhou University, Lanzhou 730000, P.R.\ China}
\email {\href{mailto:chenmh@lzu.edu.cn}{chenmh{\it @\,}lzu.edu.cn}}

\author[F. Yu]{Fan Yu}
\address{Institute for Math \& AI, Wuhan University, Wuhan 430072,  P.R.\ China}
\email {\href{mailto:yufan24@whu.edu.cn}{yufan24{\it @\,}whu.edu.cn}}


%
\keywords{elliptic-parabolic system,  BDF methods, multipliers, energy technique, error estimates}
\subjclass[2020]{Primary 65M12, 65M60; Secondary 65L06.}

\begin{abstract}
First-order fully implicit as well as implicit--explicit schemes for coupled elliptic-parabolic systems are discussed 
in [Ern and Meunier, ESAIM: M2AN, 2009] and [Altmann et al., Math.\ Comp., 2021], respectively.
The extension of the analysis to higher-order (third-, fourth-, fifth-, and sixth-order) schemes 
is not straightforward since explicitly constructing $G$ matrices (G-stability) is often tricky.
In this article, we develop fully implicit as well as implicit--explicit  backward difference formula (BDF) schemes
of order up to six. The implicit--explicit variants are decoupled, thereby enhancing computational 
efficiency; their convergence analysis requires a weak coupling condition on the poroelastic parameters. 
In contrast, no coupling conditions  are needed for the fully implicit, coupled  schemes.
We determine novel and suitable multipliers for the two proposed classes and establish
error estimates via the energy technique. 
A prominent advantage of these higher-order schemes is that, with almost the  computational cost 
of first-order schemes, they greatly improve the accuracy.
\end{abstract}

\maketitle

\maketitle

\section{Introduction}
Let $\varOmega\subseteq\R^d, d=2,3,$ be a bounded  domain with Lipschitz boundary $\partial\varOmega.$ 
The unknowns of the system are the displacement field $u:[0,T]\times\varOmega\rightarrow\R^d$ and the 
pressure $p:[0,T]\times\varOmega\rightarrow\R$. For a given time horizon $T>0$, the equations of linear poroelasticity, \cite{Biot:41,Sh:00}, read
\begin{equation}\label{system}
\begin{split}
-\nabla\cdot\sigma(u)+\eta\nabla p&=f \quad\text{in}~~ [0,T]\times\varOmega, \\
\partial_t\Big(\eta\nabla\cdot u+\frac{1}{M}p\Big)-\nabla\cdot(\kappa\nabla p)&=g  \quad\text{in}~~ (0,T]\times\varOmega,
\end{split}
\end{equation}
subject to an initial condition $p(0)=p^0.$ 
Here, with $\varepsilon(u):=\frac{1}{2}\big(\nabla u+\left(\nabla u\right)^{\top}\big)$ the symmetric gradient
 used in continuum mechanics, $\sigma$ denotes the stress tensor,
\begin{equation*}
\sigma(u)=2\mu \varepsilon(u)+\lambda\left(\nabla\cdot u\right)I,
\end{equation*}
with Lam\'{e} coefficients $\lambda$ and $\mu$, $\kappa$ the ratio of the permeability and the fluid viscosity,
$\eta$ the Biot--Willis fluid-solid coupling coefficient, $M$ the Biot modulus, and $I$ the $d\times d$ identity matrix; 
see \cite{Biot:41}. The right-hand sides $f$ and $g$ are the volumetric 
load and the fluid source, respectively, modeling an injection
or production process. Throughout this paper, we assume homogeneous Dirichlet boundary conditions, 
 $u=0$ and $p=0$ on $(0,T]\times\partial\varOmega$.

This article is devoted to the analysis of  implicit and implicit--explicit BDF schemes for the linear 
poroelasticity model \eqref{system}  
with extensions to general linear elliptic-parabolic systems. 
The poroelastic equations can be formulated as a coupled system consisting of an elliptic and a parabolic equation. 
Significant applications of this problem include biomechanics, in which the human brain and heart are modeled 
as poroelastic media with multiple fluid networks, \cite{ERT:23}. Additionally, poroelastic equations arise 
as a model problem in geomechanics.

For the temporal discretization of elliptic-parabolic systems, such as the poroelasticity equations, 
a widely adopted strategy involves decoupled approaches, wherein the elliptic and
parabolic equations can be solved sequentially. This decoupling framework replaces
large coupled systems by two smaller subsystems, thereby enhancing the computational efficiency. 
A first-order semi-explicit scheme is proposed in \cite{AMU:21}; it decouples the system under a weak coupling condition. 
Later on, this framework was extended in \cite{AMU:24}  to a second-order semi-explicit scheme  by constructing a delay equation.
Recently, in \cite{AMU:25}, a third-order semi-explicit scheme was analyzed, and weighting matrices $G$
for methods of order up to $3$ were explicitly constructed.
However, the extension to higher-order schemes is non-trivial.
In contrast, fully implicit schemes, such as the implicit Euler method, \cite{EM:09}, maintain strong coupling but avoid
coupling conditions. This approach establishes a robust theoretical framework for unconditional stability and error estimates.

In this paper,  we analyze  high-order BDF methods, both implicit and implicit--explicit,
for coupled elliptic-parabolic systems. 
We  prove the A-stability property of auxiliary schemes and infer, utilizing the  equivalence between A- and G-stability,
existence of suitable positive definite symmetric matrices $G$. 

An outline of the paper is as follows: We introduce the abstract formulation of coupled elliptic-parabolic systems in Section \ref{Se:Poro}.
In Section \ref{Se:Tempo}, we focus on time discretization by high-order BDF methods. 
In Section \ref{Se:4}, we determine suitable uniform multipliers.
Section \ref{Se:5} concerns related delay equations.
In Sections \ref{Se:Conc} and \ref{Se:Conv}, we establish consistency and  derive error estimates  for the proposed schemes. 
We conclude in Section \ref{Se:numerics} with numerical results.

\section{Abstract formulation}\label{Se:Poro}
In this section, we introduce an abstract formulation of \eqref{system} as an elliptic-parabolic system. 
We shall use the Hilbert spaces
\begin{equation*}
\mathcal{V}:=\left(H_0^1(\varOmega)\right)^d,\quad \mathcal{H_V}:=\left(L^2(\varOmega)\right)^d,
\quad \mathcal{W}:=H_0^1(\varOmega),\quad \mathcal{H_W}:=L^2(\varOmega).
\end{equation*}
%
With the dual spaces $\mathcal{V}', \mathcal{W}'$ of $\mathcal{V}, \mathcal{W},$ 
respectively, 
$\left(\mathcal{V},\mathcal{H_V},\mathcal{V}'\right)$ as well as $\left(\mathcal{W},\mathcal{H_W},\mathcal{W}'\right)$ 
form Gelfand triples with dense embeddings.
Moreover, we define the bilinear forms
\begin{equation*}
\begin{alignedat}{2}
&a(u,v):=\int_{\varOmega}\sigma(u):\varepsilon(v)\,\dd x,\qquad &&b(p,w):=\int_{\varOmega}\kappa\nabla p\cdot \nabla w\,\dd x,\\
&c(p,w):=\int_{\varOmega}\frac{1}{M} \,p \,w \,\dd x,\qquad  &&d(u,w):=\int_{\varOmega}\eta\left(\nabla\cdot u\right) w\,\dd x
\end{alignedat}
\end{equation*}
with the classical double dot notation, i.e., for matrices $A,B\in\Re^{n, m}$ we have $A:B=\text{trace}\left(A^{\top}B\right)$. 
With this, a weak formulation of \eqref{system} is as follows: seek $u:[0,T]\rightarrow\mathcal{V}$ 
and $p:[0,T]\rightarrow\mathcal{W}$ such that
\begin{equation}\label{weak}
\begin{alignedat}{2}
a(u,v)-d(v,p)&=( f,v)\quad &&\forall v\in\mathcal{V},\\
d(u_t,w)+c(p_t,w)+b(p,w)&=( g,w)\quad &&\forall w\in\mathcal{W}.
\end{alignedat}
\end{equation}
%
Correspondingly, we assume that the right-hand sides are such that $f:[0,T]\rightarrow\mathcal{V'}$,  
$g:[0,T]\rightarrow\mathcal{W'}$,  and denote by $( \cdot,\cdot)$ the
respective duality pairings. We  emphasize that it suffices to prescribe
initial data $p^0$ for $p$, since the first equation in \eqref{weak} imposes a consistency condition between $p^0$ and $u^0$.

The symmetric bilinear forms $a:\mathcal{V}\times\mathcal{V}\rightarrow\R$, $b:\mathcal{W}\times\mathcal{W}\rightarrow\R$, 
$c:\mathcal{H_W}\times\mathcal{H_W}\rightarrow\R$ are coercive and bounded; e.g., there exist positive constants $c_a$ and $C_a$ such that
\begin{equation*}
a(u,u)\geqslant c_a\|u\|^2_{\mathcal{V}}, \quad a(u,v)\leqslant C_a\|u\|_{\mathcal{V}}\|v\|_{\mathcal{V}}
\quad\forall u,v\in\mathcal{V}.
\end{equation*}
%
%
For convenience, we introduce the $a$-, $b$-, and $c$-norms,
$\|\cdot\|_a:=a(\cdot,\cdot)^{1/2}, \|\cdot\|_b:=b(\cdot,\cdot)^{1/2},$ and $\|\cdot\|_c=c(\cdot,\cdot)^{1/2},$ 
satisfying
\begin{equation*}
\frac{1}{C_a}\|\cdot\|_a^2\leqslant\|\cdot\|_\mathcal{V}^2\leqslant\frac{1}{c_a}\|\cdot\|_a^2,
\ \
\frac{1}{C_b}\|\cdot\|_b^2\leqslant\|\cdot\|_\mathcal{W}^2\leqslant\frac{1}{c_b}\|\cdot\|_b^2,
\ \ \frac{1}{C_c}\|\cdot\|_c^2\leqslant\|\cdot\|_\mathcal{H_W}^2\leqslant\frac{1}{c_c}\|\cdot\|_c^2,
\end{equation*}
respectively. The bilinear form $d:\mathcal{V}\times\mathcal{H_W}\rightarrow\R$ models the coupling
and is continuous, i.e., there exists a positive constant $C_d$ such that
\begin{equation*}
d(u,p)\leqslant C_d\left\|u\right\|_{\mathcal{V}}\left\|p\right\|_{\mathcal{H_W}}
\quad\forall u\in\mathcal{V}\ \forall p\in\mathcal{H_W}.
\end{equation*}
%

The variational formulation \eqref{weak} may also be written in operator form in the dual spaces $\mathcal{V}'$ and $\mathcal{W}'$.
Let $\mathcal{A}:\mathcal{V}\rightarrow\mathcal{V'}$, $\mathcal{B}:\mathcal{W}\rightarrow\mathcal{W}'$, 
$\mathcal{C}:\mathcal{H_W}\rightarrow\mathcal{H_W}$, and $\mathcal{D}:\mathcal{V}\rightarrow\mathcal{H_W}$ 
denote the bounded operators corresponding to the bilinear forms $a$, $b$, $c$, and $d$, respectively. 
We denote by $\mathcal{D}^{\star}$ the dual operator of $\mathcal{D}$.
Then, \eqref{weak} leads to the equivalent formulation
\begin{equation}\label{operator}
\begin{split}
\mathcal{A}u(t)-\mathcal{D^{\star}}p(t)&=f(t) \quad\text{in}~~ \mathcal{V'}, \\
\mathcal{D}u_t(t)+\mathcal{C}p_t(t)+\mathcal{B}p(t)&=g(t)  \quad\text{in}~~\mathcal{W'}.
\end{split}
\end{equation}
Since $\mathcal{A}$ is invertible, we can eliminate the variable $u$ and get a parabolic equation,
\begin{equation}\label{parabolic}
\left(\mathcal{M}+\mathcal{C}\right)p_t+\mathcal{B}p=g-\mathcal{D}\mathcal{A}^{-1}f_t;
\end{equation}
here $\mathcal{M}:=\mathcal{D}\mathcal{A}^{-1}\mathcal{D^{\star}}$  is a self-adjoint and non-negative operator.

\begin{remark}[Elliptic-parabolic systems]\label{Re:ell-par}
{\upshape
System \eqref{weak} can also be used to model linear thermoelasticity, which
considers the displacement of a material due to temperature changes, \cite{Biot:41}.
More generally, \eqref{weak} is an elliptic-parabolic system;  the elliptic part
(modeled by $a$) and the parabolic part (modeled by $b$ and $c$) are coupled through the
bilinear form $d$. We emphasize that the forthcoming analysis does not depend on the
specific application; it only depends on the properties of the bilinear forms.}
\end{remark}
\section{Temporal discretization}\label{Se:Tempo}
Let  $(\alpha, \beta)$ and $(\alpha, \gamma)$ be the implicit and explicit $q$-step BDF methods, 
respectively, generated  by the polynomials $\alpha, \beta$ and $\gamma,$
\[\alpha (\zeta)=\sum_{j=1}^q \frac 1j \zeta^{q-j} (\zeta -1)^j=\sum\limits^q_{i=0}\alpha_i \zeta ^{i}, 
\ \ \beta (\zeta)=\zeta^q, 
\ \ \gamma (\zeta)=\zeta^q-(\zeta -1)^q=\sum\limits^{q-1}_{i=0} \gamma_i \zeta^i,\]
$q=1,\dotsc,6.$ It is well known that the  implicit BDF schemes $(\alpha, \beta)$  are  strongly $A(0)$-stable for $q=1,\dotsc,6$
but are not even zero-stable for $q\geqslant 7.$

Let $N\in \N, \tau:=T/N$ be the constant time step, and $t_n :=n \tau, n=0,\dotsc,N,$
be a uniform partition of the interval $[0,T].$ 
Since we consider $q$-step schemes, we assume that we are given $q$ starting approximations 
$p^0,\dotsc,p^{q-1}\in \mathcal{W}.$  
In view of the first differential equation in \eqref{operator}, we then let  the corresponding approximations $u^0,\dotsc,u^{q-1}\in \mathcal{V}$ 
for $u$ be defined by
\begin{equation}
\label{starting-approx-u}
\A u^i-\D^\star p^i = f(t_i),\quad i=0,\dotsc,q-1.
\end{equation}

For sequences $(v^n)_{n\geqslant -q},$ we denote by ${\dot v}^n$ the \emph{discrete time derivative}
associated to the $q$-step BDF method,
\begin{equation}
\label{discrete-deriv}
{\dot v}^n:=\frac 1\tau\sum\limits^q_{i=0}\alpha_iv^{n-q+i},\quad n=0,1,\dotsc,
\end{equation}
and by  ${\hat v}^n$ the  \emph{extrapolated value} at $t_n$ of the polynomial of degree at most $q-1$ 
interpolating $(t_{n-q+i},v^{n-q+i}), i=0,\dotsc,q-1,$ 
\begin{equation}
\label{extrapolation}
{\hat v}^n:=\sum\limits^{q-1}_{i=0}\gamma_iv^{n-q+i},\quad n=0,1,\dotsc.
\end{equation}

We recursively define sequences of approximations $u^m\in \mathcal{V}, p^m\in \mathcal{W}$ to 
the nodal values $u(t_m,\cdot)$ and $p(t_m,\cdot)$ of the solutions $u$ 
and $p$ for the elliptic-parabolic system \eqref{operator}.
We shall use the notation  $f^m:=f(t_m,\cdot)$ and $g^m:=g(t_m,\cdot).$

\subsection{Fully implicit schemes}
Here, we discretize system \eqref{weak} by the implicit $q$-step BDF scheme $(\alpha, \beta),$
\begin{equation}\label{implicit}
\left.
\begin{alignedat}{2}
a(u^n,v)-d(v,p^n)&=\left(f^n,v\right)\quad &&\forall v\in\mathcal{V}\\
d({\dot u}^n,w)+c({\dot p}^n,w)+ b(p^n,w)&=\left(g^n,w\right)&&\forall w\in\mathcal{W}
\end{alignedat}
\right\}, \quad n=q,\dotsc,N,
\end{equation}
%
or, equivalently, 
\begin{equation*}
\left.
\begin{split}
\mathcal{A}u^n-\mathcal{D^{\star}}p^n&=f^n\\ 
\mathcal{D}{\dot u}^n+\mathcal{C}{\dot p}^n+\mathcal{B}p^n&=g^n
\end{split}
\right\}, \quad n=q,\dotsc,N.
\end{equation*}
Now, in analogy to \eqref{parabolic}, in view of \eqref{starting-approx-u}, the first equation yields 
$\A{\dot u}^n=\D^\star {\dot  p}^n + {\dot f}^n, n=q,\dotsc,N,$ and, with 
 $\mathcal{M}=\mathcal{D}\mathcal{A}^{-1}\mathcal{D^{\star}}$, we can eliminate ${\dot u}^n$ from the
  second equation, and obtain
\begin{equation}
\label{full-a}
(\mathcal{M}+\mathcal{C}){\dot p}^n+ \B p^n= g^n-\D\A^{-1}{\dot f}^n,\quad n=q,\dotsc,N,
\end{equation}
that is
\begin{equation}
\label{full}
\left(\mathcal{M}+\mathcal{C}\right) \sum\limits^q_{i=0}\alpha_ip^{n-q+i}
+\tau \B p^n=\tau g^n-\D\A^{-1}\sum\limits^q_{i=0}\alpha_if^{n-q+i}, \quad n=q,\dotsc,N.
\end{equation}
Notice that $p^{n-q},\dotsc,p^n$ and $f^{n-q},\dotsc,f^n$ enter into \eqref{full}; in particular,
\eqref{full} has the form of a $q$-step scheme.
\subsection{Implicit--explicit schemes}\label{subsection:3.2}
First, we successively define $p^{-1},p^{-2},\dotsc, p^{-q}$, for $n=q-1,q-2,\dotsc,0,$ 
from the equations
\begin{equation}
\label{phat-p}
 {\hat  p}^n = p^n,\quad n=q-1,q-2, \dotsc,0;
\end{equation}
since $\gamma_0=(-1)^{q-1}\ne 0,$  $p^{-1},p^{-2},\dotsc, p^{-q}$ are indeed well defined.

Secondly, replacing $p^n$ in the first equation in \eqref{implicit} by the extrapolated value ${\hat p}^n$,
we  discretize system \eqref{weak} by the implicit--explicit $q$-step  BDF scheme $(\alpha, \beta, \gamma),$
\begin{equation}\label{explicit}
\left.
\begin{alignedat}{2}
a(u^n,v)-d\left(v,{\hat p}^n\right)&=\left(f^n,v\right)\quad &&\forall v\in\mathcal{V}\\
d({\dot u}^n,w)+c({\dot p}^n,w)+ b(p^n,w)&=\left(g^n,w\right)\quad &&\forall w\in\mathcal{W}
\end{alignedat}
\right\}, \quad n=q,\dotsc,N,
\end{equation}
or, equivalently, 
\begin{equation}\label{semi}
\left.
\begin{split}
\mathcal{A}u^n-\mathcal{D^{\star}}{\hat p}^n&=f^n\\  
\mathcal{D}{\dot u}^n+\mathcal{C}{\dot p}^n+\mathcal{B}p^n&=g^n  
\end{split}
\right\}, \quad n=q,\dotsc,N.
\end{equation}
The main computational advantage of \eqref{semi} is that the system
is decoupled: we first compute $u^n$ from the first equation and subsequently 
$ p^n$ from the second equation.

Now, in view of  \eqref{phat-p} and  \eqref{starting-approx-u}, the first equation in 
\eqref{semi} is valid also for $n=0,\dotsc,q-1;$ this is the motivation for the 
 introduction of  $p^{-1},p^{-2},\dotsc, p^{-q}.$ This fact enables us to 
eliminate the variable $u$ from the second equation in \eqref{semi}
and write it in the same form for all relevant $n.$ Indeed, 
$\mathcal{A}u^\ell-\mathcal{D^{\star}}{\hat p}^\ell=f^\ell,  ~\ell=0,\dotsc,N,$
yields
\[\mathcal{A}{\dot u}^n-\mathcal{D^{\star}}\sum\limits^{q-1}_{j=0}\gamma_j {\dot p}^{n-q+j}
={\dot f}^n, \quad n=q,\dotsc,N,\]
and we easily infer that the second equation in \eqref{semi} takes the form
\begin{equation}
\label{abg3-a}
\mathcal{C}{\dot p}^{n}+\M\sum\limits^{q-1}_{j=0}\gamma_j{\dot p}^{n-q+j}
+ \B p^n= g^n-\D\A^{-1}{\dot f}^{n},\quad  n=q,\dotsc,N,
\end{equation}
i.e.,
\begin{equation}
\label{abg3}
\mathcal{C} \sum\limits^q_{i=0}\alpha_ip^{n-q+i}
+\M\sum\limits^q_{i=0}\sum\limits^{q-1}_{j=0}\alpha_i\gamma_jp^{n-2q+i+j}
+\tau \B p^n=\tau g^n-\D\A^{-1}\sum\limits^q_{i=0}\alpha_if^{n-q+i},
\end{equation}
$n=q,\dotsc,N.$ Notice that $p^{n-2q},\dotsc,p^n$ and $f^{n-q},\dotsc,f^n$ enter into \eqref{abg3}; in particular,
\eqref{abg3} has the form of a $2q$-step scheme.

Motivated by the first and the second term on the left-hand side of \eqref{abg3}, respectively,
let us introduce the polynomials
\begin{equation}
\label{characteristic-poly}
\tilde \alpha(\zeta):= \alpha(\zeta)\beta(\zeta)=\sum\limits^q_{i=0}\alpha_i \zeta ^{q+i}=\sum\limits^{2q}_{i=0}\tilde \alpha_i \zeta ^{i},\quad \hat \alpha(\zeta):=\alpha(\zeta)\gamma(\zeta)=\sum\limits^{2q-1}_{i=0}\hat \alpha_i \zeta ^{i}
\end{equation}
with
$\hat \alpha_i=\sum\limits^i_{j=0}\alpha_j\gamma_{i-j},~ i=0,\dotsc,2q-1;$
here, we used the notation $\alpha_{q+1}=\dotsb=\alpha_{2q-1}=0$ and $\gamma_q=\dotsb=\gamma_{2q-1}=0.$
With this notation, we can write \eqref{abg3} as
\begin{equation}
\label{numerical}
\mathcal{C} \sum\limits^{2q}_{i=0}\tilde \alpha_ip^{n-2q+i}+\M\sum\limits^{2q-1}_{i=0}\hat \alpha_ip^{n-2q+i}+\tau \B p^n=
\tau g^{n} -\D\A^{-1}\sum\limits^{2q}_{i=0}\tilde \alpha_if^{n-2q+i},
\end{equation}
$ n=q,\dotsc,N.$


 To ensure stability of the decoupled schemes, we shall need conditions on 
  the \emph{coupling strength}  $\omega$ between the elliptic and the parabolic equation,
\begin{equation}\label{strength}
\omega:=\frac{C_d^2}{c_ac_c}=\frac{\eta^2M}{\mu+\lambda}.
\end{equation}
%
The value of $\omega$ depends on the physical coefficients of the application and
plays a crucial role in the convergence analysis of implicit--explicit schemes. 

\subsection{Necessary stability conditions for the implicit--explicit methods  \eqref{semi}}\label{SSe:3.3}
It is easily seen that
\begin{equation}\label{eq:est-M}
\|\M v\|_{\mathcal{H_W}}\leqslant  \omega\|\mathcal{C} v\|_{\mathcal{H_W}} \quad \forall v \in \mathcal{H_W}
\end{equation}
with $\mathcal{M}=\mathcal{D}\mathcal{A}^{-1}\mathcal{D^{\star}}$ and $\omega$ the constant
in  \eqref{strength}.

Indeed, obviously, $\mathcal{A}^{-1}:\mathcal{V}'\rightarrow\mathcal{V}$ and $\mathcal{D^{\star}}:\mathcal{H_W}\rightarrow\mathcal{V}'$,
since $\mathcal{A}:\mathcal{V}\rightarrow\mathcal{V}'$ and $\mathcal{D}:\mathcal{V}\rightarrow\mathcal{H_W}$. 
Thus, $\mathcal{M}:\mathcal{H_W}\rightarrow\mathcal{H_W}$,
and we obtain
\begin{equation*}
\begin{split}
\|\M v\|_{\mathcal{H_W}}&=\|\mathcal{D}\mathcal{A}^{-1}\mathcal{D^{\star}} v\|_{\mathcal{H_W}} 
\leqslant C_d\|\mathcal{A}^{-1}\mathcal{D^{\star}} v\|_{\mathcal{V}}
\leqslant \frac{C_d}{c_a}\|\mathcal{D^{\star}} v\|_{\mathcal{V}'}
\leqslant \frac{C_d^2}{c_a}\|v\|_{\mathcal{H_W}}\\
&=c_c\omega\|v\|_{\mathcal{H_W}}\leqslant  \omega\|\mathcal{C} v\|_{\mathcal{H_W}} \quad \forall v \in \mathcal{H_W}.
\end{split}
\end{equation*}
%

\begin{lemma}[Necessary stability conditions for the implicit--explicit methods  \eqref{semi}]\label{Le:nece-cond}  
Consider a general elliptic-parabolic system and let $\M$ be dominated by $\mathcal{C},$ i.e.,
let \eqref{eq:est-M} be satisfied.
%
%
%
Then, a necessary  stability condition for the implicit--explicit $q$-step BDF method  \eqref{semi}
is
\begin{equation*}
\omega\leqslant \frac 1{2^q-1}, \quad q=1,\dotsc,6.
\end{equation*}
\end{lemma}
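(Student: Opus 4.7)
The plan is to reduce \eqref{semi} to a scalar homogeneous test problem and, for any $\omega>1/(2^q-1)$, exhibit a characteristic root strictly outside the closed unit disk. Concretely, I would take the one-dimensional instance $\mathcal{C}=1$, $\mathcal{M}=\omega$ (saturating \eqref{eq:est-M}), $\mathcal{B}=0$, and $f=g=0$, so that \eqref{numerical} collapses to a constant-coefficient linear recurrence whose characteristic polynomial, by \eqref{characteristic-poly} and the identity $\gamma(\zeta)=\zeta^q-(\zeta-1)^q$, factors as
\[
P(\zeta)=\tilde\alpha(\zeta)+\omega\,\hat\alpha(\zeta)=\alpha(\zeta)\bigl[(1+\omega)\zeta^q-\omega(\zeta-1)^q\bigr].
\]
Zero-stability of \eqref{semi} on this test forces every root of $P$ to lie in the closed unit disk (simple on its boundary), so any root strictly outside rules out $\tau$-uniform stability.

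Since $\alpha$ already satisfies the root condition for $q\leqslant 6$, the instability must come from the auxiliary factor $Q(\zeta):=(1+\omega)\zeta^q-\omega(\zeta-1)^q$. I would probe the negative real axis: for $r\geqslant 1$, $Q(-r)=0$ is equivalent to $(1+\omega)r^q=\omega(r+1)^q$, i.e.,
\[
\omega=\frac{1}{(1+1/r)^q-1}.
\]
The right-hand side is continuous and strictly increasing in $r$ on $[1,\infty)$, equals $1/(2^q-1)$ at $r=1$, and tends to $+\infty$ as $r\to\infty$. By the intermediate value theorem every $\omega>1/(2^q-1)$ is attained at some $r_\omega>1$, and then $\zeta=-r_\omega$ is a root of $Q$, hence of $P$, strictly outside the unit disk, which produces an exponentially growing mode of the recurrence.

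To honour the coercivity of $\mathcal{B}$ demanded by the statement, I would conclude with a perturbation argument: for fixed coercive $\mathcal{B}$ the term $\tau\mathcal{B}p^n$ in \eqref{numerical} contributes only an $O(\tau)$ polynomial perturbation to $P$, under which the simple real root $-r_\omega$, being strictly outside the unit disk, persists for all sufficiently small $\tau$ by continuity of roots (or Rouch\'e on a small disk around $-r_\omega$). Hence $\tau$-uniform stability fails as soon as $\omega>1/(2^q-1)$, which is the claimed necessary condition.

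The step I anticipate to require the most care is the very first one: bookkeeping the reduction \eqref{abg3}$\to$\eqref{numerical} in the scalar test so that the characteristic polynomial comes out as exactly $\tilde\alpha+\omega\hat\alpha$, with the signs inherited from $\gamma(\zeta)=\zeta^q-(\zeta-1)^q$ in \eqref{characteristic-poly}. A sign error here would shift the critical root away from $\zeta=-1$ and destroy the clean value $1/(2^q-1)$; once the factorisation $P=\alpha\cdot Q$ is in hand, the root-locus analysis on the negative real axis and the perturbation step are both elementary.
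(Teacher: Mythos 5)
Your proof is correct, and it takes a genuinely different route from the paper's. The paper works directly with the full perturbed polynomial $\kappa(\zeta,x):=\tilde\alpha(\zeta)+\ell\hat\alpha(\zeta)+x[\beta(\zeta)]^2$ (with $x=\tau\lambda$), evaluates $\kappa(-1,x)=c_q\bigl[1-\ell(2^q-1)\bigr]+x$, and uses the intermediate value theorem on the negative real axis to produce a pair $(\zeta^\star,x^\star)$ with $\zeta^\star<-1$ and $\kappa(\zeta^\star,x^\star)=0$, then invokes the von Neumann criterion for that specific $\tau\lambda=x^\star$. You instead first drop the elliptic term, exploit the clean factorisation $\tilde\alpha+\omega\hat\alpha=\alpha\cdot\bigl[(1+\omega)\zeta^q-\omega(\zeta-1)^q\bigr]$ to solve $Q(-r)=0$ in closed form as $\omega=1/\bigl[(1+1/r)^q-1\bigr]$, thereby pinpointing $\omega=1/(2^q-1)$ as exactly the value at which $Q$ touches $-1$, and then restore $\tau\mathcal{B}$ by a Rouch\'e/perturbation step. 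Both arguments pivot on the value $\zeta=-1$ (where $\gamma(-1)=-(-1)^q(2^q-1)$); the paper's gets the $\tau\mathcal{B}$ term for free at the cost of an implicit root, while your factorisation buys an explicit location for the unstable root and a transparent reason for the threshold, at the cost of the extra perturbation step. For that step you should record (as you anticipate) that $-r_\omega$ is a \emph{simple} root of $P$: a short computation gives $Q'(-r_\omega)=q(-1)^{q-1}\omega(r_\omega+1)^{q-1}/r_\omega\neq 0$, and $\alpha(-r_\omega)\neq 0$ because $|{-r_\omega}|>1$ and $\alpha$ has all its roots in the closed unit disk for $q\leqslant 6$; with that, the Rouch\'e argument on a small disk around $-r_\omega$ closes the proof.
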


\begin{proof}
Motivated by  \eqref{numerical}, consider the $2q$-step scheme
\begin{equation}
\label{nece-stab3}
\mathcal{C} \sum\limits^{2q}_{i=0}\tilde \alpha_iv^{n-2q+i}+\M\sum\limits^{2q-1}_{i=0}\hat \alpha_iv^{n-2q+i}
+\tau \B v^n=0,\quad  n=q,\dotsc,N.
\end{equation}
We shall show that for $\M=\ell \mathcal{C}$ with $\ell > 1/(2^q-1),$ 
the scheme \eqref{nece-stab3} is, in general, not unconditionally stable.
First, \eqref{nece-stab3} takes in this case the form
\[\mathcal{C} \sum\limits^{2q}_{i=0}(\tilde \alpha_i+\ell \hat \alpha_i)v^{n-2q+i}+\tau \B v^n=0,\]
with $\hat \alpha_{2q}=0,$ that is
\begin{equation*}
 \sum\limits^{2q}_{i=0}(\tilde \alpha_i+\ell \hat \alpha_i){\tilde v}^{n-2q+i}+\tau \mathcal{C}^{-1/2}\B \mathcal{C}^{-1/2} {\tilde v}^n=0,
 \quad  n=q,\dotsc,N,
\end{equation*}
with ${\tilde v}^j:= \mathcal{C}^{1/2} v^j.$ Notice that the operator $\mathcal{C}^{-1/2}\B \mathcal{C}^{-1/2}$ is positive
definite and self-adjoint.

Consider now the function $\kappa,$
\[\kappa(\zeta,x):=\tilde \alpha(\zeta)+\ell \hat \alpha(\zeta)+x[\beta(\zeta)]^2.\]
Since $\kappa(\cdot,x)$ is a polynomial of even degree $2q$ with a positive leading coefficient, we have
\begin{equation}
\label{nece-stab5}
\lim_{\zeta\to -\infty}\kappa(\zeta,x)= \infty.
\end{equation}
Furthermore,
$\alpha(-1)=c_q(-1)^q,~ \beta(-1)=(-1)^q,$ and  $\gamma(-1)=-(-1)^q(2^q-1),$
with a positive constant $c_q,$ whence
\begin{equation*}
\kappa(-1,x)=\tilde \alpha(-1)+\ell \hat \alpha(-1)+x[\beta(-1)]^2=
c_q\big [1-\ell (2^q-1)]+x.
\end{equation*}
Since the first term on the right-hand side is negative, for sufficiently small $x>0,$ we have
$\kappa(-1,x)<0.$ From this property and \eqref{nece-stab5}, we infer that there exist
$\zeta^\star <-1$ and $x^\star >0$ such that
\begin{equation}
\label{nece-stab7}
\kappa(\zeta^\star,x^\star)=0.
\end{equation}

According to the von Neumann criterion, a necessary stability condition is that,
if $\lambda>0$ is an eigenvalue of $\mathcal{C}^{-1/2}\B \mathcal{C}^{-1/2}$, the solutions of
\begin{equation*}
 \sum\limits^{2q}_{i=0}(\tilde \alpha_i+\ell \hat \alpha_i){\tilde v}^{n-2q+i}+\tau \lambda {\tilde v}^n=0
\end{equation*}
are bounded. For $\tau \lambda =x^\star$ this is not the case, since, in view of \eqref{nece-stab7},
the root condition is not satisfied; therefore, the scheme is not unconditionally stable.
\end{proof}

\section{Uniform multipliers for implicit--explicit BDF methods}\label{Se:4}
Multipliers for the (implicit) three-, four-, and five-step BDF methods were introduced
by Nevanlinna and Odeh in 1981 (see \cite{NO}) to make the energy
technique applicable to the stability analysis of these methods for parabolic
equations; no multipliers are required for the A-stable one- and two-step BDF methods.
The multiplier technique was first applied to the stability analysis  for parabolic equations 
in \cite{LMV}.

This technique hinges on the celebrated equivalence of A- and G-stability for multistep methods
by Dahlquist; see \cite{D}.

\begin{lemma}[\cite{D}; see also \cite{BC} and
{\cite[Section V.6]{HW}}]\label{Le:Dahl}
Let $\alpha(\zeta)=\alpha_q\zeta^q+\dotsb+\alpha_0$ and
$\kappa(\zeta)=\kappa_q\zeta^q+\dotsb+\kappa_0$ be polynomials of degree $q$,
with real coefficients, that have no common divisor.
Let $(\cdot,\cdot)$ be a real inner product with associated norm $|\cdot|.$
If
\begin{equation}
\label{A}
\Real \frac {\alpha(\zeta)}{\kappa(\zeta)}>0\quad\text{for }\, |\zeta|>1,
\tag{A}
\end{equation}
then there exists a positive definite symmetric matrix $G=(g_{ij})\in \R^{q,q}$
and real $\delta_0,\dotsc,\delta_q$ such that for $v^0,\dotsc,v^{q}$ in the inner product space,
\begin{equation}
\label{G}
 \Big (\sum_{i=0}^q\alpha_iv^{i},\sum_{j=0}^q\kappa_jv^{j}\Big )=
\sum_{i,j=1}^qg_{ij}(v^{i},v^{j})
-\sum_{i,j=1}^qg_{ij}(v^{i-1},v^{j-1})
+\Big |\sum_{i=0}^q\delta_iv^{i}\Big |^2.    
\tag{G}
\end{equation}
\end{lemma}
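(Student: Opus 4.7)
My plan is to reformulate the identity (G) as a polynomial identity in two complex variables and then to construct $G$ and the $\delta_i$ by exploiting (A) together with the Fejer--Riesz theorem. Expanding both sides of (G) by bilinearity, using symmetry of $(\cdot,\cdot)$, and matching coefficients of the forms $(v^i,v^j)$ shows that (G) is equivalent to the polynomial identity
\[
\alpha(\zeta)\kappa(\eta) + \alpha(\eta)\kappa(\zeta) = 2(\zeta\eta - 1)\,\tilde G(\zeta,\eta) + 2\,\Delta(\zeta)\,\Delta(\eta),
\]
where $\tilde G(\zeta,\eta) := \sum_{i,j=1}^q g_{ij}\zeta^{i-1}\eta^{j-1}$ is symmetric of bidegree at most $(q-1,q-1)$ and $\Delta(\zeta) := \sum_{i=0}^q \delta_i \zeta^i$. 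The task thus reduces to constructing such $\tilde G$ and $\Delta$ with $(g_{ij})$ positive definite.

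To build the factorization, I would observe that $\Phi(\zeta,\eta) := \alpha(\zeta)\kappa(\eta) + \alpha(\eta)\kappa(\zeta)$, restricted to $\eta = \bar\zeta$ on $|\zeta|=1$, reduces to $2\,\Real[\alpha(\zeta)\overline{\kappa(\zeta)}]$, which is a nonnegative trigonometric polynomial by (A) and continuity. Because $\alpha,\kappa$ have real coefficients, the function $\theta \mapsto \Phi(e^{i\theta}, e^{-i\theta})$ is even, so the real-coefficient version of Fejer--Riesz produces a real polynomial $\Delta$ of degree $\leqslant q$ with $\Phi(\zeta,\bar\zeta) = 2|\Delta(\zeta)|^2$ on $|\zeta|=1$. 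The polynomial $\Phi(\zeta,\eta) - 2\Delta(\zeta)\Delta(\eta)$ then vanishes whenever $\zeta\eta = 1$, and polynomial division by $\zeta\eta - 1$ yields the symmetric $\tilde G$ of the required bidegree.

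The main obstacle is proving positive definiteness of $G=(g_{ij})$. From $(|\zeta|^2-1)\tilde G(\zeta,\bar\zeta)=\Phi(\zeta,\bar\zeta)-2|\Delta(\zeta)|^2$ and (A) one readily gets the pointwise bound $\tilde G(\zeta,\bar\zeta)\geqslant 0$ on $|\zeta|\geqslant 1$, but this alone is not enough to deduce semidefiniteness of $G$ (simple $q=2$ examples show the implication fails). Instead, $G$ must be interpreted as a Lyapunov/energy matrix for the linear recurrence with characteristic polynomial $\alpha$: the G-identity rewrites the pairing $(\sum\alpha_iv^i,\sum\kappa_jv^j)$ as a telescoping quadratic form plus a nonnegative defect, and (A) is precisely the condition making that quadratic form positive definite. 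To upgrade semidefiniteness to strict positive-definiteness, the coprimality of $\alpha$ and $\kappa$ must be invoked: a null vector of $G$ would, after unwinding through the Lyapunov identification, force a nontrivial common polynomial factor of $\alpha$ and $\kappa$. Carefully constructing this Lyapunov interpretation and verifying strict positivity is the technical heart of Dahlquist's proof.
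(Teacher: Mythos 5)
The paper does not prove this lemma; it is quoted verbatim from Dahlquist \cite{D}, with pointers to Baiocchi--Crouzeix \cite{BC} and Hairer--Wanner \cite[Section~V.6]{HW} for proofs. So there is no ``paper's proof'' to match against, and the only thing to assess is whether your proposal would, if completed, constitute a correct proof.

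Your overall route is the right one and is essentially the Baiocchi--Crouzeix approach: translate \eqref{G} into the symmetric generating-function identity
$\tfrac12\bigl[\alpha(\zeta)\kappa(\eta)+\alpha(\eta)\kappa(\zeta)\bigr]=(\zeta\eta-1)\widetilde G(\zeta,\eta)+\Delta(\zeta)\Delta(\eta)$,
use \eqref{A} plus continuity to get nonnegativity of the even trigonometric polynomial $\Real\bigl[\alpha(\e^{\ii\theta})\overline{\kappa(\e^{\ii\theta})}\bigr]$, apply the real Fej\'er--Riesz theorem to produce a real $\Delta$ of degree $\leqslant q$, and then divide by $\zeta\eta-1$ to obtain the symmetric $\widetilde G$ of bidegree $(q-1,q-1)$. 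All of that is sound (including your observation that the restriction to the unit circle determines the identity on all of $\{\zeta\eta=1\}$ by analytic continuation, so the division is exact). You also correctly diagnose, with a concrete $q=2$ counterexample in mind, that the pointwise bound $\widetilde G(\zeta,\bar\zeta)\geqslant 0$ on $|\zeta|\geqslant 1$ does \emph{not} imply that the matrix $(g_{ij})$ is positive semidefinite.

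The genuine gap is exactly where you stop: the positive definiteness of $G$. What you offer in its place is a description of what the argument should accomplish, not an argument. The sentence ``the G-identity rewrites the pairing \ldots\ as a telescoping quadratic form plus a nonnegative defect, and (A) is precisely the condition making that quadratic form positive definite'' is circular: the existence of a positive definite $G$ satisfying \eqref{G} is the conclusion of the lemma, so you cannot invoke \eqref{G} to argue for the positivity of $G$. Likewise, ``a null vector of $G$ would, after unwinding through the Lyapunov identification, force a nontrivial common polynomial factor of $\alpha$ and $\kappa$'' names the role coprimality must play, but no mechanism is given by which a null vector produces a common factor. To close the gap one must actually establish positive \emph{semi}definiteness of $\widetilde G$ first --- in \cite{BC} this is done by a clever representation of $x^{\top}Gx$ that does not reduce to a single pointwise evaluation of $\widetilde G(\zeta,\bar\zeta)$ --- and only then use coprimality to rule out a nontrivial kernel. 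As written, your proposal is a plausible plan whose ``technical heart'' (your phrase) is missing, so it does not yet prove the lemma.
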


Properties \eqref{A} and \eqref{G} mean that the $q$-step method $(\alpha,\kappa)$ is
A-stable and  G-stable, respectively. 

\begin{definition}[Multipliers and Nevanlinna--Odeh multipliers]\label{De:mult}
Let $\alpha$ be 
a real polynomial of the degree $q$ and $\beta(\zeta)=\zeta^q.$
Consider a $q$-tuple $(\mu_1,\dotsc,\mu_q)$ of real numbers such that
with $\mu(\zeta):=\zeta^q-\mu_1\zeta^{q-1}-\dotsb-\mu_q$ and  the given $\alpha$, 
the pair $(\alpha,\mu)$ satisfies the A-stability condition \eqref{A}, with $\kappa(\zeta)$
replaced by $\mu(\zeta),$ and, in addition, the polynomials $\alpha$ and $\mu$ have no common divisor.
Then, $(\mu_1,\dotsc,\mu_q)$ are \emph{Nevanlinna--Odeh multipliers} or \emph{multipliers}, respectively, 
for the $q$-step method $(\alpha,\beta),$  if they are such that
\begin{equation*}
|\mu_1|+\dotsb+|\mu_q| <1
\end{equation*}
or  satisfy the milder \emph{positivity} property
\begin{equation}
\label{pos-prop}
1-\mu_1\cos \varphi-\dotsb-\mu_q\cos (q\varphi) >0 \quad \forall \varphi \in \R,
\tag{P}
\end{equation}
respectively; see \cite{ACYZ:21} for \eqref{pos-prop}.
\end{definition}

For $\mu_i\geqslant 0,$ \eqref{pos-prop} is equivalent to $|\mu_1|+\dotsb+|\mu_q| <1;$ otherwise, for $q\geqslant 2,$
it is a weaker condition.

\begin{remark}[Chebyshev polynomials and trigonometric identities]\label{Re:Cheb}
Recall  the Chebyshev polynomials $T_\ell$ and $U_\ell$ of the first and the second kind,
respectively,  $T_\ell(x) = \cos(\ell \arccos x)$ and   
$\sin (\ell \arccos x )=\sqrt{1-x^2} U_{\ell-1}(x), x\in [-1,1].$ 
They yield  the trigonometric identities
%
\begin{equation}
\label{trig-id}
\cos (\ell \varphi)=T_\ell(x),\ \  \ell\in \N_0,  \quad \text{and}\quad \sin (\ell \varphi)=U_{\ell-1}(x)\sin \varphi,\ \ \ell\in \N,
\quad x:=\cos\varphi;
\end{equation}
we shall use these identities for $\ell=2,\dotsc,6$ in the sequel.  The Chebyshev polynomials satisfy the relations
$T_0(x)=U_0(x)=1, T_1(x)=x, U_1(x)=2x, T_{n+1}(x)=2xT_n(x)-T_{n-1}(x),$ and $U_{n+1}(x)=2U_n(x)-U_{n-1}(x),n=1,\dotsc .$

Let us also mention that   the positivity property \eqref{pos-prop} can be rewritten in the form
\begin{equation}
\label{pos-prop-Cheb}
1-\mu_1T_1(x)-\dotsb-\mu_qT_q(x) >0 \quad \forall x \in [-1,1].
\end{equation}
\end{remark}

Our objective here is the determination of \emph{uniform} multipliers for the
implicit--explicit $q$-step BDF schemes, $q=4,5,6,$ with 
generating polynomial $\check{\alpha},$ 
\begin{equation*}
\check{\alpha}(\zeta)= \tilde \alpha(\zeta)+m \hat \alpha(\zeta), 
\end{equation*}
see \eqref{characteristic-poly} and \eqref{numerical}, for all parameters $m$ in the range $[0,1/(2^q-1)]$.
Notice that, for $m=0,$  $\check{\alpha}$ reduces to
 the generating polynomial $\check{\alpha}(\zeta)= \tilde \alpha(\zeta)=\zeta^q\alpha(\zeta)$
of the corresponding fully implicit scheme.



\begin{proposition}[Uniform multipliers for the implicit--explicit  six-step BDF method]\label{pro:six}
For $0\leqslant m\leqslant 1/63$, the set of numbers
\begin{equation}
\label{mu}
\mu_1=1,\quad \mu_2=-0.9,\quad \mu_3=0.3,\quad \mu_4=\mu_5=\mu_6=0,
\end{equation}
is a uniform multiplier for the implicit--explicit  six-step BDF method.
\end{proposition}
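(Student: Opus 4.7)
The statement requires verifying the two conditions of Definition \ref{De:mult}: the positivity property \eqref{pos-prop} and the A-stability condition \eqref{A} for the pair $(\check\alpha,\zeta^q\mu)$, where $\mu(\zeta):=\zeta^q-\mu_1\zeta^{q-1}-\dotsb-\mu_q$ is the multiplier polynomial built from the given tuple; the factor $\zeta^q$ matches the degree $2q$ of $\check\alpha$.

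\emph{Verifying \eqref{pos-prop}.} Substituting the specified values into the Chebyshev form \eqref{pos-prop-Cheb}, I must show that
\[
P(x):=1-T_1(x)+0.9\,T_2(x)-0.3\,T_3(x)=0.1-0.1\,x+1.8\,x^2-1.2\,x^3
\]
is strictly positive on $[-1,1]$. This follows from a brief critical-point analysis: $P'(x)=-3.6\,x^2+3.6\,x-0.1$ has two roots in $(0,1)$, and $P$ is positive at both of them and at both endpoints.

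\emph{Verifying \eqref{A}.} From \eqref{characteristic-poly} one has $\check\alpha(\zeta)=\alpha(\zeta)\bigl[(1+m)\zeta^q-m(\zeta-1)^q\bigr]$, hence
\[
\frac{\check\alpha(\zeta)}{\zeta^q\mu(\zeta)}=\frac{\alpha(\zeta)}{\mu(\zeta)}\cdot\Bigl[1+m-m\bigl(1-\zeta^{-1}\bigr)^q\Bigr].
\]
Since $\mu(\zeta)=\zeta^3(\zeta^3-\zeta^2+0.9\,\zeta-0.3)$ is a Schur polynomial (all roots in the open unit disk) and the consistency zero $\alpha(1)=0$ cancels the corresponding zero of $\mu$, a standard maximum-principle argument reduces \eqref{A} to non-negativity of the real part on $|\zeta|=1$. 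On the unit circle, using $1-e^{-i\varphi}=-2i\sin(\varphi/2)\,e^{-i\varphi/2}$, the perturbation factor becomes $(1+m)+64\,m\sin^6(\varphi/2)\,e^{-3i\varphi}$, while $\alpha(e^{i\varphi})/\mu(e^{i\varphi})$ has non-negative real part by the classical Nevanlinna--Odeh A-stability for the six-step BDF. Expanding the product, taking real parts, and applying the identities \eqref{trig-id} converts the condition into a polynomial inequality $F(x,m)\geq 0$ on $x=\cos\varphi\in[-1,1]$. Crucially, $F$ is \emph{affine} in $m$, so its minimum over $[0,1/63]$ is attained at an endpoint; $m=0$ reduces to the classical case, leaving the single univariate inequality $F(\cdot,1/63)\geq 0$ on $[-1,1]$, which can be handled by Sturm-type analysis or direct factorization.

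The main obstacle is precisely this final verification. The perturbation factor $1+m-m(1-\zeta^{-1})^q$ is not a positive-real function, so the naive ``product of positive-real factors'' reasoning fails; one must track $\arg\bigl[(\alpha/\mu)\cdot(\text{perturbation})\bigr]$ and show it stays in $[-\pi/2,\pi/2]$ uniformly in $(\varphi,m)$. Moreover, by Lemma \ref{Le:nece-cond} the range $m\leq 1/(2^q-1)=1/63$ is tight, so $F(\cdot,1/63)$ is expected to touch zero at $x=-1$ (i.e., $\zeta=-1$), making the inequality sharp at that point and requiring careful non-asymptotic estimates rather than a margin argument.
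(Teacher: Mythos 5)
Your skeleton matches the paper's proof: you verify positivity via Chebyshev polynomials, reduce the A-stability to the unit circle by the maximum principle, and — correctly, and this is the key structural insight — observe that the resulting expression is affine in $m$, so it suffices to check the two endpoints $m=0$ and $m=1/63$. The multiplicative factoring $\check\alpha/(\zeta^q\mu)=(\alpha/\mu)\cdot[(1+m)-m(1-\zeta^{-1})^q]$ is a nice reformulation (the paper does not display it this way), and your observation that $F(\cdot,1/63)$ must touch zero at $\zeta=-1$ is exactly right: the paper's Case II reduces to $32(1-x^2)P(x)\geqslant 0$, with the $(1-x^2)$ factor vanishing at $x=\pm 1$.

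However, the proposal stops precisely where the proposition's substance lies. You acknowledge "the main obstacle is precisely this final verification" but do not carry it out: you neither compute the polynomial $F(x,m)$ in terms of $x=\cos\varphi$ nor establish its non-negativity at the two endpoints. The paper does this explicitly: for $m=0$ it derives $4(1-x)P(x)\geqslant 0$ with $P(x)=-800x^5+2480x^4-2440x^3+829x^2-73x+34$, and for $m=1/63$ it derives $32(1-x^2)P(x)\geqslant 0$ with the degree-$10$ polynomial in \eqref{PP2}, then checks positivity of each $P$ on $[-1,1]$ (partly by elementary splitting, partly by inspection of the graph). Since the proposition asserts that \emph{these specific numbers} constitute a multiplier, the concrete polynomial verification cannot be outsourced — it is the entire content of the A-stability half, not an obstacle to be flagged. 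Two further minor points: the $m=0$ case is not "classical Nevanlinna--Odeh" (Nevanlinna and Odeh only produced multipliers for the three-, four-, and five-step BDF; the six-step multiplier is from \cite{ACYZ:21}, though this citation does cover it); and the remark that "the consistency zero $\alpha(1)=0$ cancels the corresponding zero of $\mu$" is incorrect, since $\mu(1)=0.6\neq 0$ — what $\alpha(1)=0$ produces is the vanishing factor $(1-x)$ in the real-part polynomial, which is why the paper only needs $P\geqslant 0$ rather than $P>0$. You also omit the no-common-divisor hypothesis of Lemma \ref{Le:Dahl}, which the paper verifies by explicit polynomial division in each case.

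Finally, one discrepancy in the positivity part: the paper does not merely show \eqref{pos-prop}; it proves the stronger bound that the expression is $\geqslant 9/100$, because this margin is what feeds into the factor $\tfrac{9}{100}\tau\sum\|e^n\|_b^2$ in the stability estimate \eqref{abg37}. Your unshifted polynomial $P(x)=0.1-0.1x+1.8x^2-1.2x^3$ proves \eqref{pos-prop} but not the quantitative lower bound that the convergence proof actually uses.
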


\begin{proof}
The proof consists of two parts; we first prove the A-stability property
 \eqref{A} and subsequently the positivity property \eqref{pos-prop}.

\emph{A-stability property \eqref{A}}.
The corresponding polynomial $\mu$ is
\begin{equation}
\label{mu1}
\mu(\zeta)=\zeta^{12}-\zeta^{11}+0.9\zeta^{10}-0.3\zeta^9
=\zeta^9\tilde{\kappa}(\zeta)\quad\text{with}\quad \tilde{\kappa}(\zeta)=\zeta^3-\zeta^2+0.9\zeta-0.3.
\end{equation}
Hence, to show that the roots of $\mu$ are inside the unit disk, it suffices to show that this is the case for $\tilde{\kappa}$. Now,
$\tilde{\kappa}(0.3)=-93/1000<0$,  $\tilde{\kappa}(0.5)=1/40>0,$
and thus $\tilde{\kappa}$ has a real root $\zeta_1\in (0.3,0.5).$ Actually, this is the only real root of $ \tilde{\kappa},$
since $ \tilde{\kappa}$ is strictly increasing on the real axis,
$\tilde{\kappa}'(x)=3x^2-2x+0.9>0$.

Let  $\zeta_2,\zeta_3$ be the complex conjugate roots of $\tilde{\kappa}.$
According to Vieta's formulas,
\begin{equation*}
\zeta_1\zeta_2\zeta_3=\zeta_1|\zeta_2|^2=0.3,
\end{equation*}
which, in combination with $\zeta_1>0.3,$ implies $|\zeta_2|<1.$
Thus, $|\zeta_1|,|\zeta_2|,|\zeta_3|<1$. We infer that all roots of $\mu$ are inside the unit disk.

The generating polynomial $\check{\alpha},$ for $q=6,$ is
\begin{equation*}
\begin{split}
60 \check{\alpha}(\zeta)&=147\zeta^{12}-(360-882m)\zeta^{11}+(450-4365m)\zeta^{10}-(400-11040m)\zeta^9\\
&\quad+(225-18555m)\zeta^8-(72-22632m)\zeta^7+(10-20864m)\zeta^6\\
&\quad+14700m\zeta^5-7815m\zeta^4+3030m\zeta^3-807m\zeta^2+132m\zeta-10m.
\end{split}
\end{equation*}
For  $0\leqslant m\leqslant 1/(2^q-1),$ $\tilde \alpha+m \hat \alpha$ is a convex combination
of  $\tilde \alpha$ and  $\tilde \alpha+1/(2^q-1)\hat \alpha;$ hence, to prove the A-stability property
\begin{equation*}
\Real \frac {\check{\alpha}(\zeta)}{\mu(\zeta)}
=\Real \frac {\tilde \alpha(\zeta)+m \tilde \gamma(\zeta)}{\mu(\zeta)}>0\quad\text{for }\, |\zeta|>1,\quad 0\leqslant m\leqslant\frac{1}{2^q-1},
\end{equation*}
it suffices to consider the two extreme cases, $m=0$ and $m=1/63.$

\emph{Case I: $m=0$}. We have
\begin{equation*}
60 \check{\alpha}(\zeta)=60 \zeta^{6}\alpha(\zeta)=\zeta^{6}\left(
147\zeta^{6}-360\zeta^{5}+450\zeta^{4}-400\zeta^3+225\zeta^2-72\zeta+10\right)
\end{equation*}
and $\mu(\zeta)=\zeta^9\tilde{\kappa}(\zeta)=\zeta^6\delta(\zeta)$ with $\delta(\zeta)=\zeta^3\tilde{\kappa}(\zeta).$

First, $\alpha(0)=1/6.$ 
Furthermore, $60\alpha(\zeta)=(\zeta-1)\chi(\zeta)$ with
\begin{equation*}
\begin{split}
\chi(\zeta):={}&147\zeta^{5}-213\zeta^{4}+237\zeta^{3}-163\zeta^2+62\zeta-10\\
={}&\Big (147\zeta^2-66\zeta+\frac{387}{10}\Big) \tilde{\kappa}(\zeta)-\frac{1}{100}\nu (\zeta)
\end{split}
\end{equation*}
and $\nu (\zeta):=2080\zeta^2-737\zeta-161,$ and none of the roots of the quadratic polynomial
$\nu$ is a root of $\tilde{\kappa};$ consequently, the polynomials $\chi$ and $\tilde{\kappa}$ have no common divisor.
We then easily infer that the polynomials $\alpha$ and $\delta$ do not have common divisor.

Now, $\alpha/\delta$ is holomorphic outside the unit disk in the
complex plane and
\[\lim_{|z|\to \infty}\frac {\alpha(z)}{\delta (z)}=\alpha_6=\frac {147}{60}>0.\]
Therefore, according to the maximum principle for harmonic functions,
the A-stability property \eqref{A} is equivalent to
\begin{equation*}
\Real \frac {\alpha(\zeta)}{\delta(\zeta)}\geqslant 0 \quad \forall \zeta\in \K,
\end{equation*}
with $\K:=\{\zeta\in \Co : |\zeta|=1\},$
i.e., equivalent to
\begin{equation}
\label{Real1}
\Real \big [\alpha(\e^{\ii \varphi})\delta (\e^{-\ii \varphi})\big ]\geqslant 0 \quad \forall \varphi \in \R.
\end{equation}
%


The desired property \eqref{Real1} takes the form
\begin{equation}
\label{Real2}
\Real \big [60\alpha(\e^{\ii \varphi})
\e^{-\ii 3\varphi} \big (10\e^{-\ii 3\varphi}-10\e^{-\ii 2\varphi}+ 9 \e^{-\ii \varphi}-3\big )\big]
\geqslant 0 \quad \forall \varphi \in \R.
\end{equation}
Now, it is easily seen that
\begin{equation*}
\begin{split}
60\alpha(\e^{\ii \varphi})\e^{-\ii 3\varphi}
&=\big [157 \cos(3\varphi)-432 \cos(2\varphi) +675 \cos\varphi-400\big ]\\
&\quad+\ii \big [137 \sin(3\varphi)-288 \sin(2\varphi) +225 \sin\varphi\big ].
\end{split}
\end{equation*}
With $x:=\cos\varphi,$ 
using  trigonometric identities of the form \eqref{trig-id}, we see that
\begin{equation}\label{Real3}
60\alpha(\e^{\ii \varphi})\e^{-\ii 3\varphi}
=4(1-x)(8+59x- 157x^2)+\ii 4 (137x^2-144x+22)\sin\varphi.
\end{equation}
%
Notice that the factor $1-x$ in the real part of $\alpha(\e^{\ii \varphi})\e^{-\ii 3\varphi}$
is due to the fact that $\alpha(1)=0.$ Similarly,
\[\begin{split}
10\e^{-\ii 3\varphi}-10\e^{-\ii 2\varphi}+ 9 \e^{-\ii \varphi}-3
&=\big [10 \cos(3\varphi)-10 \cos(2\varphi) +9 \cos\varphi-3\big ]\\
&\quad-\ii \big [10 \sin(3\varphi)-10 \sin(2\varphi) +9 \sin\varphi\big ]
\end{split}\]
and
\begin{equation}
\label{Real4}
10\e^{-\ii 3\varphi}-10\e^{-\ii 2\varphi}+ 9 \e^{-\ii \varphi}-3
=40x^3 - 20x^2 - 21x + 7-\ii (40x^2 - 20x - 1)\sin\varphi.
\end{equation}
In view of \eqref{Real3} and \eqref{Real4}, the desired property  \eqref{Real2}
can be written in the form
\begin{equation*}
4(1-x)P(x)\geqslant 0 \quad \forall x \in [-1,1]
\end{equation*}
with
%
%
\begin{equation*}
\begin{split}
P(x)&=-800x^5 + 2480x^4 - 2440x^3 + 829x^2  - 73x + 34\\
&=x^2(-800x^3 + 2480x^2 - 2440x+789)+40x^2  - 73x + 34.
\end{split}
\end{equation*}
Now, $P$ is positive in the interval $[-1,1],$ and thus   \eqref{Real2} is valid.
Indeed, first, the quadratic polynomial $40x^2-73x + 34$ is positive for all real $x$,
since it does not have real roots. 
Also, the cubic polynomial  $-800x^3 + 2480x^2 -2440x+789$ is positive in the interval $[-1,1]$. 
In fact, all terms are positive for negative $x$; for the positivity of this cubic polynomial in
$[0,1]$ see  Figure \ref{Fig:P} (left).

\emph{Case II: $m=1/63$}. We have $\mu(\zeta)=\zeta^9\tilde{\kappa}(\zeta)$ and
\begin{equation*}
\begin{split}
60 \check{\alpha}(\zeta)63&=9261\zeta^{12}-21798\zeta^{11}+23985\zeta^{10}-14160\zeta^9-4380\zeta^8+18096\zeta^7\\
&\quad-20234\zeta^6+14700\zeta^5-7815\zeta^4+3030\zeta^3-807\zeta^2+132\zeta-10.
\end{split}
\end{equation*}
%

First, $\check{\alpha}(0)=-1/378.$ 
Furthermore, $60 \check{\alpha}(\zeta)63=(3\zeta-1)(\zeta^2-1)(3\zeta^2+1)(7\zeta^2-4\zeta+1)\chi(\zeta)$;
here, $\chi$ and $\tilde{\kappa}$ are the polynomials of Case I.
Since $\chi$ and $\tilde{\kappa}$ have no common divisor, we infer that 
the polynomials $\check{\alpha}$ and $\mu$ do not have common divisor.

Now, $\check{\alpha}/\mu$ is holomorphic outside the unit disk in the
complex plane and
\[\lim_{|z|\to \infty}\frac {\check{\alpha}(z)}{\mu (z)}=\frac {147}{60}>0.\]
As before, the A-stability property \eqref{A} is equivalent to
\begin{equation*}
\Real \frac {\check{\alpha}(\zeta)}{\mu(\zeta)}\geqslant 0 \quad \forall \zeta\in \K,
\end{equation*}
%
i.e., equivalent to
\begin{equation}
\label{RReal1}
\Real \big [\check{\alpha}(\e^{\ii \varphi})\mu (\e^{-\ii \varphi})\big ]\geqslant 0 \quad \forall \varphi \in \R.
\end{equation}
%


In view of \eqref{mu1}, the desired property \eqref{RReal1} takes the form
\begin{equation}
\label{RReal2}
\Real \big [60\check{\alpha}(\e^{\ii \varphi})
\e^{-\ii 6\varphi}63\cdot 10\mu(\e^{-\ii \varphi})
\e^{\ii 6\varphi}]\geqslant 0 \quad \forall \varphi \in \R.
\end{equation}
Now, it is easily seen that
\begin{equation*}
\begin{split}
60\check{\alpha}(\e^{\ii \varphi})\e^{-\ii 6\varphi}63
&=9251\cos(6\varphi)-21666\cos(5\varphi)+23178\cos(4\varphi)-11130\cos(3\varphi)\\
&\quad-12195\cos(2\varphi)+32796 \cos\varphi-20234 \\
&\quad+\ii\big [9271\sin(6\varphi)-21930 \sin(5\varphi)
+24792\sin(4\varphi)-17190\sin(3\varphi)\\
&\quad+3435\sin(2\varphi)+3396\sin\varphi\big ].
\end{split}
\end{equation*}
With $x:=\cos\varphi,$ using  trigonometric identities of the form \eqref{trig-id},
%
we  see that
\begin{equation}\label{RReal3}
\begin{split}
60\check{\alpha}(\e^{\ii \varphi})&\e^{-\ii 6\varphi}63
=32(1-x^2)\left(-9251x^4 + 10833x^3 - 1169x^2 - 1317x + 184 \right)\\
&\quad+\ii 32\left( 9271x^5 - 10965x^4 - 3073x^3 + 6075x^2 - 1146x - 42\right)\sin\varphi.
\end{split}
\end{equation}
Similarly,
\[\begin{split}
10\mu(\e^{-\ii \varphi})\e^{\ii 6\varphi}
&=10 \cos(6\varphi)-10 \cos(5\varphi)+9 \cos(4\varphi)-3 \cos(3\varphi)\\
&\quad-\ii \big [10 \sin(6\varphi)-10 \sin(5\varphi)+9 \sin(4\varphi)-3 \sin(3\varphi)\big ]
\end{split}\]
and
\begin{equation}
\label{RReal4}
\begin{split}
10\mu(\e^{-\ii \varphi})\e^{\ii 6\varphi}
&= 320x^6 - 160x^5 - 408x^4 + 188x^3 + 108x^2 - 41x - 1\\
&\quad-\ii (320x^5 - 160x^4 - 248x^3 + 108x^2 + 24x - 7)\sin\varphi.
\end{split}
\end{equation}
In view of \eqref{RReal3} and \eqref{RReal4}, the desired property  \eqref{RReal2}
can be written in the form
\begin{equation*}
32(1-x^2)P(x)\geqslant 0 \quad \forall x \in [-1,1]
\end{equation*}
with
%
\begin{equation}\label{PP2}
\begin{split}
P(x)&=6400x^{10} - 45440x^9 + 138880x^8 - 237184x^7 + 245716x^6 - 159242x^5\\
&\quad + 66209x^4 - 16589x^3 + 473x^2 + 787x + 110.
\end{split}
\end{equation}

Now, $P$ is positive in the interval $[-1,1]$;  thus, \eqref{RReal2} is valid. 
Indeed, first, the cubic polynomial  $-16589x^3 + 473x^2 + 787x + 110$ is positive for negative $x$; 
see  Figure \ref{Fig:P} (middle); all other terms are positive for negative $x$. 
Furthermore, for $0\leqslant x  \leqslant 1,$ $P$ is positive; see  Figure \ref{Fig:P} (right).

\begin{figure}[!ht]
\centering
\scalebox{0.8}{
\psset{yunit=0.003cm,xunit=3.5cm}
\begin{pspicture}(-0.10,-86.1)(1.16,910)
\psaxes[ticks=none,labels=none,linewidth=0.6pt]{->}(0,0)(-0.13,-85.8)(1.12,852)%
[{\footnotesize $\!\!\!x$},0][$ $,90]
\psset{plotpoints=10000}
\psPolynomial[coeff=789 -2440 2480 -800, linewidth=0.5pt,linecolor=blue]{0}{1}
\uput[0](0.33,250){\small $p$}
\uput[0](-0.072,895){\footnotesize $y$}
\uput[0](0.93,-65){\footnotesize $1$}
\pscircle*(0,200){0.038}
\pscircle*(0,400){0.038}
\pscircle*(0,600){0.038}
\uput[0](-0.21,200){\scriptsize $200$}
\uput[0](-0.21,400){\scriptsize $400$}
\uput[0](-0.21,600){\scriptsize $600$}
\uput[0](-0.14,-65.6){\footnotesize $O$}
\pscircle*(1,0){0.038}
\end{pspicture} 
\quad
\psset{yunit=0.00015cm,xunit=3.5cm}
\begin{pspicture}(-1.035,-1700)(0.16,18010)
\psaxes[ticks=none,labels=none,linewidth=0.6pt]{->}(0,0)(-1.06,-1700)(0.12,16700)%
[{\footnotesize $\!\!\!x$},0][$ $,90]
\psset{plotpoints=10000}
\psPolynomial[coeff=110  787  473  -16589, linewidth=0.5pt,linecolor=blue]{-1}{0}
\uput[0](-0.68,5250){\small $p$}
\uput[0](-0.072,17570){\footnotesize $y$}
\uput[0](-1.125,-1350){\scriptsize $-1$}
\pscircle*(0,6000){0.038}
\pscircle*(0,12000){0.038}
\uput[0](-0.255,6000){\scriptsize $6000$}
\uput[0](-0.299,12000){\scriptsize $12000$}
\uput[0](-0.14,-1300){\footnotesize $O$}
\pscircle*(-1,0){0.038}
\end{pspicture} 
\quad 
\psset{yunit=0.01cm,xunit=3.5cm}
\begin{pspicture}(-0.10,-28.1)(1.16,273)
\psaxes[ticks=none,labels=none,linewidth=0.6pt]{->}(0,0)(-0.11,-25.8)(1.12,250)%
[{\footnotesize $\!\!\!x$},0][$ $,90]
\psset{plotpoints=10000}
\psPolynomial[coeff=110 787 473  -16589 66209  -159242  245716  -237184 138880 -45440 6400, linewidth=0.5pt,linecolor=blue]{0}{1}
\uput[0](0.53,150){\small $P$}
\uput[0](-0.072,265){\footnotesize $y$}
\uput[0](0.93,-21){\scriptsize $1$}
\pscircle*(0,100){0.038}
\pscircle*(0,200){0.038}
\uput[0](-0.21,100){\scriptsize $100$}
\uput[0](-0.21,200){\scriptsize $200$}
\uput[0](-0.14,-20){\footnotesize $O$}
\pscircle*(1,0){0.038}
\end{pspicture} 
}
\caption{The graphs of  polynomials $p(x)=-800x^3 + 2480x^2 -2440x+789$
in the interval $[0,1]$ (left), $p(x)=-16589x^3 + 473x^2 +787x+110$ 
in $[-1,0]$ (middle), and  $P$  of \eqref{PP2} in  $[0,1]$ (right).}
\label{Fig:P}
\end{figure}
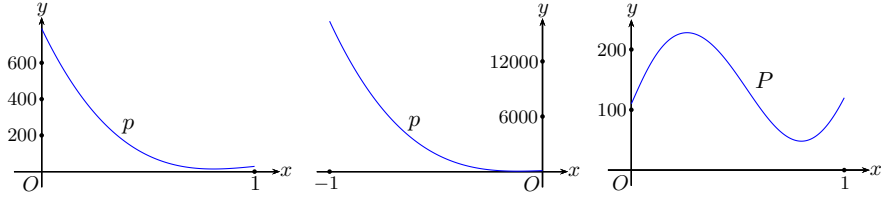

\emph{Positivity property \eqref{pos-prop}.}
Here, we prove the desired positivity property \eqref{pos-prop} for the multiplier \eqref{mu}.
Actually, since in the stability analysis we will use the value $\varepsilon=9/100$,
we shall directly prove that the function
in  \eqref{pos-prop} for the multiplier \eqref{mu} is bounded from below by $9/100.$ To this end,  we subtract
$9/100$ from the corresponding expression, and shall show that the function $g$,
\begin{equation}
\label{f}
g(s):=\frac{91}{100} -\cos s+\frac{9}{10}\cos(2s)-\frac{3}{10}\cos(3s), \quad s\in \R,
\end{equation}
is positive. Now, with $x:=\cos s,$  trigonometric identities of the form \eqref{trig-id}
yield $g(s)=p(x)$ with $p$  the polynomial 
\begin{equation*}
p(x):=-\frac{6}{5}x^3+\frac{9}{5}x^2-\frac{1}{10}x+\frac{1}{100},\quad x\in [-1,1].
\end{equation*}
%
%
It is easily seen that $p$ attains its minimum in $[-1,1]$ at $x^{\star}=(3-2\sqrt{2})/6$
and
\[p(x^\star)>0.008584 > 0.\]
Therefore, $g$ is indeed positive; 
the desired positivity property \eqref{pos-prop} is satisfied. 
\end{proof}

Analogously to Proposition \ref{pro:six}, we can establish the following results; we omit the proofs 
for the sake of brevity. 

\begin{proposition}[Uniform multipliers for the implicit--explicit  four-step BDF method]\label{pro:four}
For $0\leqslant m\leqslant 1/15$, the set of numbers
\begin{equation}\label{mu4}
\mu_1=0.5,\quad \mu_2=\mu_3=\mu_4=0,
\end{equation}
is a uniform multiplier for the implicit--explicit  four-step BDF method.  
\end{proposition}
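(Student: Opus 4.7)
The plan is to mirror the two-part structure used in the proof of Proposition \ref{pro:six}: first establish the A-stability property \eqref{A} for $\check{\alpha}$ paired with the multiplier polynomial $\mu(\zeta)=\zeta^8-\tfrac{1}{2}\zeta^7=\zeta^7(\zeta-\tfrac{1}{2})$, and second verify the positivity property \eqref{pos-prop}. The positivity check is immediate in this case: for the multiplier \eqref{mu4}, the condition reads $1-\tfrac{1}{2}\cos\varphi>0$, which even gives the quantitative lower bound $\tfrac{1}{2}$ and so suffices for the stability argument used later.

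For the A-stability half, I would first note that the roots of $\mu$ are obviously inside the open unit disk (the only nonzero root is $\zeta=\tfrac{1}{2}$). Since $\check{\alpha}=\tilde{\alpha}+m\hat{\alpha}$ is a convex combination of $\tilde{\alpha}$ (at $m=0$) and $\tilde{\alpha}+\tfrac{1}{15}\hat{\alpha}$ (at $m=\tfrac{1}{15}$), it suffices to verify \eqref{A} at the two extreme values. In each case, I would compute $\check{\alpha}$ explicitly, cancel the common factor $\zeta^4$ between $\check{\alpha}$ and $\mu$, and confirm that $\check{\alpha}$ and $\mu$ share no common divisor. Then $\check{\alpha}/\mu$ is holomorphic outside the closed unit disk with positive limit $\alpha_4=\tfrac{25}{12}>0$ at infinity, so by the maximum principle for harmonic functions \eqref{A} reduces to
\begin{equation*}
\Real\bigl[\check{\alpha}(\e^{\ii\varphi})\mu(\e^{-\ii\varphi})\bigr]\geqslant 0\quad\forall\varphi\in\R.
\end{equation*}
Using the trigonometric identities \eqref{trig-id} and the substitution $x:=\cos\varphi$, this becomes a polynomial inequality of the form $(1-x)Q(x)\geqslant 0$ (for $m=0$) or $(1-x^2)Q(x)\geqslant 0$ (for $m=\tfrac{1}{15}$) on $[-1,1]$, the factor $1-x$ arising because $\alpha(1)=0$. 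It then remains to check that the explicit polynomial $Q$ is nonnegative on $[-1,1]$, which can be done by splitting into subintervals, bounding individual monomial blocks, or invoking a graph, exactly as in the treatment of the polynomials in Proposition \ref{pro:six}.

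The main obstacle is the $m=\tfrac{1}{15}$ case: unlike $m=0$, the polynomial $\check{\alpha}$ does not simply factor as $\zeta^4\alpha(\zeta)$, so expanding $\check{\alpha}(\e^{\ii\varphi})\mu(\e^{-\ii\varphi})$ produces a denser trigonometric polynomial whose real part, once converted to $x=\cos\varphi$, yields a higher-degree $Q$. I expect that (as in Proposition \ref{pro:six}) the nonnegativity of $Q$ on $[-1,1]$ is real but tight, so verifying it cleanly requires either an algebraic decomposition into a sum of manifestly nonnegative pieces or a careful case split between $[-1,0]$ and $[0,1]$. Everything else---factoring out $\zeta^4$, checking coprimality with $\mu$, and the positivity property---is routine given the blueprint already laid out.
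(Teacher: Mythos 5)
The paper states Proposition \ref{pro:four} without proof (``we omit the proofs for the sake of brevity''), so there is no paper proof to compare against; your task is to supply one, and your plan of mirroring Proposition \ref{pro:six} is exactly what the paper intends. The positivity check with lower bound $\tfrac12$, the reduction to the extreme values $m=0,\,1/15$ by convexity, the coprimality check, the maximum-principle reduction to the unit circle, and the extraction of the factor $1-x$ (resp.\ $1-x^2$) are all correct and match the $q=6$ template, including the cause of the $1-x^2$ factor for $m=1/15$: $\check\alpha(1)=0$ because $\alpha(1)=0$, and $\check\alpha(-1)=0$ because $1+\tfrac1{15}\gamma(-1)=1-\tfrac{15}{15}=0$.

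One point to correct: your sentence ``In each case, I would \ldots cancel the common factor $\zeta^4$ between $\check\alpha$ and $\mu$'' is only valid for $m=0$. For $m=1/15$ one has $\check\alpha(\zeta)=\alpha(\zeta)\bigl[\zeta^4+\tfrac1{15}\gamma(\zeta)\bigr]$ with $\check\alpha(0)=-\alpha(0)/15=-1/60\neq0$, so $\check\alpha$ has no factor of $\zeta$ at all and nothing cancels against $\mu(\zeta)=\zeta^7(\zeta-\tfrac12)$. You do acknowledge this a paragraph later (``$\check\alpha$ does not simply factor as $\zeta^4\alpha(\zeta)$''), but the two statements contradict each other as written. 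The fix is the same device the paper uses in Case II of Proposition \ref{pro:six}: do not attempt to cancel powers of $\zeta$; instead multiply $\check\alpha(\e^{\ii\varphi})$ by $\e^{-\ii 4\varphi}$ and $\mu(\e^{-\ii\varphi})$ by $\e^{\ii 4\varphi}$ (these factors cancel in the product) to center both trigonometric polynomials, then convert the real part to a polynomial in $x=\cos\varphi$ via \eqref{trig-id}. With that adjustment, and once the resulting explicit $Q$ is actually checked to be nonnegative on $[-1,1]$, the argument is sound.
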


\begin{proposition}[Uniform multipliers for the implicit--explicit  five-step BDF method]\label{pro:five}
For $0\leqslant m\leqslant 1/31$, the set of numbers
\begin{equation}\label{mu5}
\mu_1=1,\quad \mu_2=-0.25,\quad \mu_3=\mu_4=\mu_5=0,
\end{equation}
is a uniform multiplier for the implicit--explicit  five-step BDF method.
\end{proposition}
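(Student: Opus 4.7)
The plan is to mirror, line for line, the two-part structure of the proof of Proposition \ref{pro:six} but specialised to $q=5$ with the multipliers $(\mu_1,\mu_2,\mu_3,\mu_4,\mu_5)=(1,-1/4,0,0,0)$. The first part will verify the A-stability property \eqref{A} for the pair $(\check\alpha,\mu)$, and the second part will verify the positivity property \eqref{pos-prop}.

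The positivity property is easier and I would tackle it first as a warm-up. With only two nonzero $\mu_i$, the function to bound below is
\[
g(s) := 1 - \cos s + \tfrac14\cos(2s), \qquad s\in\R.
\]
Substituting $x=\cos s$ and using $\cos(2s)=2x^2-1$, this reduces to the quadratic
\[
p(x) = \tfrac12 x^2 - x + \tfrac34 = \tfrac12(x-1)^2 + \tfrac14 \geqslant \tfrac14>0,
\]
giving \eqref{pos-prop} with built-in margin (so a concrete $\varepsilon>0$ analogous to the $9/100$ of Proposition \ref{pro:six} can be peeled off for later use in the stability analysis).

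For the A-stability property, the associated multiplier polynomial is
\[
\mu(\zeta) = \zeta^{5}\bigl(\zeta^{5}-\zeta^{4}+\tfrac14\zeta^{3}\bigr) = \zeta^{8}\bigl(\zeta-\tfrac12\bigr)^{2},
\]
whose roots (at $0$ and $1/2$) lie strictly inside the unit disk. Since $\check\alpha(\zeta)=\tilde\alpha(\zeta)+m\hat\alpha(\zeta)$ is affine in $m$, the map $m\mapsto \Real[\check\alpha(\zeta)/\mu(\zeta)]$ is a convex combination on the interval $[0,1/31]$, so it suffices to verify \eqref{A} at the two extreme cases $m=0$ and $m=1/31$. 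For each case I would: factor the common $\zeta^{5}$ out of both $\check\alpha$ and $\mu$; verify that the reduced polynomials share no common divisor by checking that $\zeta=1/2$ is not a root of the reduced $\check\alpha$ (the only algebraic candidate); observe that the leading-coefficient ratio equals $\alpha_{5}=137/60>0$; and then invoke the maximum principle for harmonic functions on the exterior of the unit disk to reduce \eqref{A} to showing
\[
\Real\bigl[\check\alpha(\e^{\ii\varphi})\,\mu(\e^{-\ii\varphi})\bigr] \geqslant 0 \qquad \forall \varphi\in\R.
\]
Expanding via the trigonometric identities \eqref{trig-id} with $x=\cos\varphi$, the imaginary parts will carry a $\sin\varphi$ factor that vanishes in the real part of the product, and the factor $(1-x)$ can be extracted thanks to the consistency condition $\alpha(1)=0$. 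The remaining inequality then reduces to $(1-x)P(x)\geqslant 0$ on $[-1,1]$ for an explicit real polynomial $P$, one for each extreme value of $m$.

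The main obstacle, as in Proposition \ref{pro:six}, will be the polynomial positivity test for $P$ when $m=1/31$: $P$ will have degree roughly $2q=10$, its coefficients mix contributions from $\tilde\alpha$ and $\hat\alpha$, and there is no algebraic shortcut to sign-determination. I would split $[-1,1]$ into $[-1,0]$ and $[0,1]$: on $[-1,0]$ the dominant terms typically have favourable signs and the remaining low-degree cubic or quartic bump can be bounded by inspection; on $[0,1]$ one groups the polynomial into a dominant positive block (a cubic factor times $x^{k}$) plus a small perturbation, and verifies each block by a graphical plot, exactly as done in the six-step case. The $m=0$ case is analogous but simpler, since $\check\alpha$ reduces to $\zeta^{5}\alpha(\zeta)$ with $\alpha$ the pure implicit BDF5 polynomial.
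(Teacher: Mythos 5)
Your plan correctly mirrors the six‐step proof of Proposition \ref{pro:six}, which is exactly what the paper intends (it omits the five‐step proof and calls it ``analogous''), and your positivity argument is complete: $1-\cos s+\tfrac14\cos(2s)=\tfrac12(x-1)^2+\tfrac14\geqslant\tfrac14$ is correct and gives a clean margin. Two small slips in the A-stability part, though. First, for $m=1/31$ you cannot ``factor the common $\zeta^5$ out of both $\check\alpha$ and $\mu$'': since $\check\alpha(\zeta)=\alpha(\zeta)\big(\zeta^5+m\,\gamma(\zeta)\big)$, one has $\check\alpha(0)=\alpha(0)\cdot m\,\gamma(0)\neq0$ for $m>0$, so $\zeta$ does not divide $\check\alpha$; the $\zeta^5$ cancellation only happens at $m=0$ (as in Case~I of the six-step proof; in Case~II the paper leaves $\check\alpha$ and $\mu$ at full degree). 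Correspondingly, the ``only algebraic candidate'' for a common divisor is not just $\zeta=\tfrac12$: you must also observe that $\zeta=0$ is a root of $\mu=\zeta^8(\zeta-\tfrac12)^2$ and rule it out via $\check\alpha(0)\neq0$ (and, for $m=0$, $\alpha(0)\neq0$). Second, at the endpoint $m=1/(2^q-1)=1/31$ one has $\check\alpha(-1)=0$ because $\gamma(-1)=-(-1)^q(2^q-1)$ exactly cancels $\beta(-1)=(-1)^q$; consequently, as in Case~II of the six‐step proof, the prefactor you should expect to extract from $\Real\big[\check\alpha(\e^{\ii\varphi})\mu(\e^{-\ii\varphi})\big]$ is $(1-x^2)$, not merely $(1-x)$. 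Neither slip invalidates the approach, since $1-x^2\geqslant0$ on $[-1,1]$ just as well, but they would change the bookkeeping when you reduce to the explicit polynomial $P$.
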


\section{Related delay equations}\label{Se:5}
Following an idea from \cite{AMU:21,AMU:24}, we first construct a  delay system for the coupled elliptic-parabolic system \eqref{weak}
and then discretize the delay  system by  implicit  BDF methods;
this results in implicit--explicit  BDF methods for the original elliptic-parabolic system \eqref{operator}.
In more detail, we consider the delay system
\begin{equation}\label{adeq1}
\begin{alignedat}{2}
a(\mathfrak{u},v)-d(v,\hat{\mathfrak{p}})
&=\left(f,v\right)\quad &&\forall v\in\mathcal{V},\\
d(\mathfrak{u}_t,w)+c(\mathfrak{p}_t,w)+b(\mathfrak{p},w)
&=\left(g,w\right) \quad &&\forall w\in\mathcal{W},
\end{alignedat}
\end{equation}
with $\hat{\mathfrak{p}}$ an approximation of $\mathfrak{p}$ in the interval $[0,T]$, depending on the specific BDF method
as well as on the time step $\tau,$  namely, in analogy to \eqref{extrapolation},
\begin{equation}
\label{extrapolation-delay}
\hat{\mathfrak{p}}(t):=\sum\limits^{q-1}_{i=0}\gamma_i\mathfrak{p}(t-q\tau+i\tau),
\quad t\in [0,T].
\end{equation}
Let us emphasize that due to \eqref{extrapolation-delay}, in contrast to
the original system \eqref{weak}, such a delay system calls for a history function $\mathfrak{p}|_{[-q\tau,0]}(t)=\varPhi(t)$ in $[-q\tau,0]$ rather
than only for an initial value, i.e., a prescription of $\mathfrak{p}$ for $-q\tau\leqslant t\leqslant0.$

Then, \eqref{adeq1} leads to the equivalent formulation
\begin{equation}\label{delay}
\begin{alignedat}{2}
\mathcal{A}\mathfrak{u}(t)-\mathcal{D^{\star}}\hat{\mathfrak{p}}(t)&=f(t) \quad &&\text{in}~~ \mathcal{V}', \\
\mathcal{D}\mathfrak{u}_t(t)+\mathcal{C}\mathfrak{p}_t(t)+\mathcal{B}\mathfrak{p}(t)&=g(t)  \quad &&\text{in}~~\mathcal{W}',
\end{alignedat}
\end{equation}
for $t\in (0,T).$ Substitution of the expression for $\hat{\mathfrak{p}}$ from \eqref{extrapolation-delay} into \eqref{delay} yields
\begin{equation*}
\begin{split}
\mathcal{A}\mathfrak{u}(t)-\mathcal{D^{\star}}
\sum\limits^{q-1}_{i=0}\gamma_i\mathfrak{p}(t-q\tau+i\tau)&=f(t) \quad\text{in}~~ \mathcal{V}', \\
\mathcal{D}\mathfrak{u}_t(t)+\mathcal{C}\mathfrak{p}_t(t)
+\mathcal{B}\mathfrak{p}(t)&=g(t)  \quad\text{in}~~\mathcal{W}'.
\end{split}
\end{equation*}
Elimination of the variable $\mathfrak{u}$, yields  the delay parabolic equation
\begin{equation}\label{adeq144}
\begin{split}
\mathcal{C}\mathfrak{p}_t(t)+\mathcal{M}\sum\limits^{q-1}_{j=0}\gamma_j\mathfrak{p}_t(t-q\tau+j\tau)
+\mathcal{B}\mathfrak{p}(t)
=g(t)-\mathcal{D}\mathcal{A}^{-1}f_t(t).
\end{split}
\end{equation}

In \eqref{adeq1}, we seek functions $\mathfrak{u}:[0,T]\rightarrow\mathcal{V}$
and $\mathfrak{p}:[-q\tau,T]\rightarrow\mathcal{W}$, given sufficiently smooth right-hand sides and a history function
$\varPhi\in C^{\infty}([-q\tau,0];\mathcal{H_W})$ for $\mathfrak{p}$ that ensures consistency of the initial conditions 
of the original problem
\eqref{weak}. 
For this, a sufficient condition reads (\cite{AMU:24})
\begin{equation}\label{adeq2}
\varPhi(t_{-\ell})=p^{-\ell},\quad \ell=1,\ldots,q, 
\quad \varPhi(0)=p(0)=p^0;
\end{equation}
see  \eqref{phat-p} and \eqref{start-acc} for the definition of $p^{-\ell}$ and for the requirement on $p^{\ell},$
respectively. By \eqref{adeq2} and the first equation in \eqref{adeq1}, we conclude that $\mathfrak{u}(0)=u(0)=u^0$,
since 
\begin{equation*}
a(\mathfrak{u}(0),v)=(f(0),v)+ d(v,\hat{\mathfrak{p}}(0))
=(f(0),v)+ d(v,p(0)) \quad \forall v\in \mathcal{V}.
\end{equation*}

\begin{proposition}[On the discrepancies $\mathfrak{u}-u$ and  $\mathfrak{p}-p$]\label{proposition:5.1}
Assume that the forcing terms $f$ and $g$ are sufficiently smooth  and the history function $\varPhi\in C^{\infty}([-q\tau,0];\mathcal{H_W})$ satisfies \eqref{adeq2}. 
Then, there exists a solution $(\mathfrak{u},\mathfrak{p})$ of the delay system \eqref{adeq1} which satisfies $\mathfrak{p}^{(q+1)}\in L^{\infty}((0,T); \mathcal{H_W})$. 
Moreover, the solutions of \eqref{weak} and \eqref{adeq1} only differ by a term of order $q$, i.e., for almost every $t\in[0,T]$
there holds 
\begin{equation*}
\|\mathfrak{u}(t)-u(t)\|_{\mathcal{V}}
+\|\mathfrak{p}(t)-p(t)\|_\mathcal{H_W}
+\|\mathfrak{p}(t)-p(t)\|_\mathcal{W}
\leqslant C\tau^q.
\end{equation*}
\end{proposition}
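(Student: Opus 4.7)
The plan is to establish existence together with the regularity $\mathfrak{p}^{(q+1)}\in L^{\infty}((0,T);\mathcal{H_W})$ by the method of steps, and then to obtain the $O(\tau^{q})$ discrepancy via an inductive energy argument over successive intervals of length $\tau$.

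The key observation is that $\hat{\mathfrak{p}}(t)=\sum_{i=0}^{q-1}\gamma_i\mathfrak{p}(t-q\tau+i\tau)$ depends only on values of $\mathfrak{p}$ at times in $[t-q\tau,t-\tau]$, which lie strictly earlier than $t$. This strict retardation allows a method-of-steps construction: on each interval $I_k:=[(k-1)\tau,k\tau]$ all delay values are already known (from $\varPhi$ for small $k$ and from the previously constructed solution otherwise), so the first equation in \eqref{adeq1} determines $\mathfrak{u}$ as the solution of a time-parametrized elliptic problem with smooth right-hand side. Differentiating that equation yields $\mathcal{A}\mathfrak{u}_t=\mathcal{D}^{\star}\hat{\mathfrak{p}}_t+f_t$, and substitution into the second equation in \eqref{adeq1} reduces it to a standard parabolic equation for $\mathfrak{p}$ on $I_k$ with smooth source and initial datum inherited from the preceding step. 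Iterating over $k=1,\dotsc,N$ produces $(\mathfrak{u},\mathfrak{p})$ with the first $q+1$ time derivatives bounded uniformly on the interior of each $I_k$; jumps at the grid points are immaterial for the claimed $L^{\infty}$-bound in time.

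For the error estimate I would extend $p$ and $u$ to $[-q\tau,0]$ via the values $p^{-\ell}$ of \eqref{phat-p} and the relations \eqref{starting-approx-u}, and denote by $\hat p(t):=\sum_{i=0}^{q-1}\gamma_i p(t-q\tau+i\tau)$ the continuous analogue of the BDF extrapolation. Since $\gamma$ is exact on polynomials of degree at most $q-1$, a Taylor expansion gives $\rho(t):=\hat p(t)-p(t)=O(\tau^{q}\|p^{(q)}\|_{L^{\infty}})$ uniformly in $t\in[0,T]$. Subtracting \eqref{weak} from \eqref{adeq1}, with $e_u:=\mathfrak{u}-u$ and $e_p:=\mathfrak{p}-p$, one obtains the error system
\[
a(e_u,v)=d(v,\hat e_p)+d(v,\rho(t)),\qquad d(\partial_t e_u,w)+c(\partial_t e_p,w)+b(e_p,w)=0,
\]
with $\hat e_p(t):=\sum_{i=0}^{q-1}\gamma_i e_p(t-q\tau+i\tau)$. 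Coercivity of $a$ and continuity of $d$, applied to the first equation and its time derivative, yield $\|e_u(t)\|_{\mathcal{V}}+\|\partial_t e_u(t)\|_{\mathcal{V}}\leqslant C\bigl(\|\hat e_p(t)\|_{\mathcal{H_W}}+\|\partial_t\hat e_p(t)\|_{\mathcal{H_W}}+\tau^{q}\bigr)$.

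To close the estimate I would proceed by induction on $k$, carrying the hypothesis $\|e_p(t)\|_{\mathcal{W}}+\|\partial_t e_p(t)\|_{\mathcal{H_W}}\leqslant C_{\ast}\tau^{q}$ for $t\leqslant (k-1)\tau$. Because $\hat e_p$ on $I_k$ involves $e_p$ only at strictly earlier times, the hypothesis forces $\|\hat e_p(t)\|_{\mathcal{H_W}}+\|\partial_t\hat e_p(t)\|_{\mathcal{H_W}}=O(\tau^{q})$ on $I_k$, and consequently $\|\partial_t e_u(t)\|_{\mathcal{V}}=O(\tau^{q})$ there. Testing the second error equation with $w=\partial_t e_p$, using coercivity of $c$ and $b$ together with continuity of $d$ to absorb the cross-term $d(\partial_t e_u,\partial_t e_p)$, and applying Gronwall's lemma on $I_k$ closes the inductive step and propagates the bound to all $t\in I_k$. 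The main technical obstacle will be the base case: one must verify that the backward extension of $p$ via \eqref{phat-p} together with the compatibility condition \eqref{adeq2} on the history function $\varPhi$ render $e_p$ of size $O(\tau^{q})$ throughout $[-q\tau,0]$, so that the induction is correctly seeded and the initial history does not degrade the order of the overall estimate.
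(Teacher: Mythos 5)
Your proposal departs from the paper's proof in two ways, and one of them is a genuine gap rather than a stylistic difference.

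The paper does \emph{not} decompose the residual $\hat{\mathfrak p}-p$ as $\hat e_p+\rho$ with $\rho:=\hat p-p$. Instead it writes the first error equation as $a(e_u,v)-d(v,e_p)=d(v,\hat{\mathfrak p}-\mathfrak p)$, so the only term that must be estimated is the \emph{extrapolation defect of $\mathfrak p$}, namely $\hat{\mathfrak p}-\mathfrak p$. This is the crucial point: $\mathfrak p$ is defined and smooth on the whole of $[-q\tau,T]$ (by construction, with $\mathfrak p|_{[-q\tau,0]}=\varPhi$), so the Taylor bound $\|\hat{\mathfrak p}(t)-\mathfrak p(t)\|_{\mathcal{H_W}}\leqslant C\tau^q\|\mathfrak p^{(q)}\|_{L^\infty((-q\tau,T);\mathcal{H_W})}$ holds uniformly for all $t\in[0,T]$ without any backward extension of $p$. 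Your decomposition instead requires $\hat p(t)$, and hence $p$, to be defined on $[-q\tau,0)$, and it requires $\hat e_p(t)=\hat{\mathfrak p}(t)-\hat p(t)$ to be controllable there. Extending $p$ ``via the discrete values $p^{-\ell}$'' does not produce a function with bounded $q$-th derivative on $[-q\tau,0]$, and nothing in \eqref{adeq2} guarantees that $e_p=\varPhi-p$ is $O(\tau^q)$ on that interval pointwise (the compatibility holds only at the grid points). You flag this as ``the main technical obstacle'' for the base case, and indeed as written the induction is not seeded; this is the gap. The paper's choice of decomposition sidesteps it entirely.

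The second difference is structural: you propagate the bound by a method-of-steps induction over intervals $I_k$, while the paper does a single global energy argument. Concretely, the paper tests the error system with $(v,w)=(e_{u,t},e_p)$ and integrates over $[0,t]$, which leaves an uncontrolled $\int_0^t\|e_{u,t}\|_{\mathcal V}^2$; it then differentiates the first error equation in time, tests with $(v,w)=(e_{u,t},e_{p,t})$, and absorbs $\frac{c_a}{2}\|e_{u,t}\|_{\mathcal V}^2$ into $\|e_{u,t}\|_a^2$, which closes both estimates at once and produces the $\|e_p\|_{\mathcal W}$, $\|e_p\|_{\mathcal{H_W}}$, and $\|e_u\|_{\mathcal V}$ bounds simultaneously. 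No interval-by-interval Gronwall iteration and no induction hypothesis are required, because the only defect is $\hat{\mathfrak p}-\mathfrak p$ and its time derivative, both of which are $O(\tau^q)$ globally. Your method-of-steps existence argument is perfectly reasonable (it is what a standard DDE construction does, and the paper delegates the corresponding regularity claim to a cited reference), but for the error estimate the inductive structure is unnecessary and introduces complications (tracking the constant over $O(1/\tau)$ subintervals, seeding the base case) that the paper's choice of decomposition makes moot. If you keep your plan, you must at minimum replace $\rho=\hat p-p$ by $\hat{\mathfrak p}-\mathfrak p$ (absorbing $d(v,e_p)$ back into the left-hand side as the paper does), after which the backward-extension issue disappears and a global Gronwall estimate suffices.
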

\begin{proof}
Let $e_p:=\mathfrak{p}-p$ and $e_u:=\mathfrak{u}-u$. 
Note that we have $e_u(0)=0$ and, due to the particular choice of the history function $\varPhi$ in \eqref{adeq2},
also $e_p(0)=0$.
Subtracting  \eqref{adeq1} from \eqref{weak}, we get
\begin{equation}\label{adeq4}
\begin{split}
a(e_u,v)-d(v,e_p)&=d(v,\hat{\mathfrak{p}}-\mathfrak{p}),\\
d(e_{u,t},w)+c(e_{p,t},w)+b(e_p,w)&=0,
\end{split}
\end{equation}
for all test functions $v\in\mathcal{V}$ and $w\in\mathcal{W}$.
Taking $v=e_{u,t}$ and $w=e_p$ in \eqref{adeq4}, we have
\begin{equation}\label{adeq5}
\begin{split}
\frac{1}{2}\frac{\dd}{\dd t}\|e_u\|_a^2
+\frac{1}{2}\frac{\dd}{\dd t}\|e_p\|_c^2+\|e_p\|_b^2
&\leqslant C_d\|e_{u,t}\|_\mathcal{V}\|\hat{\mathfrak{p}}-\mathfrak{p}\|_\mathcal{H_W}\\
&\leqslant \frac{1}{2}\|e_{u,t}\|^2_\mathcal{V}
+\frac{C_d^2}{2}\|\hat{\mathfrak{p}}-\mathfrak{p}\|^2_\mathcal{H_W}.
\end{split}
\end{equation}

The order of the $q$-step method $(\alpha,\gamma)$ is $q,$ i.e.,
\[\sum_{i=0}^{q-1}i^{\ell-1} \gamma_i= q^{\ell-1},\quad\ell=1,\dotsc,q.\]  
Therefore, in view of \eqref{extrapolation-delay}, by Taylor expanding about $t-q\tau$, we see that
leading  terms of order up to $q-1$ cancel, and we obtain 
\begin{equation}\label{adeq6}
\begin{split}
\hat{\mathfrak{p}}(t)-\mathfrak{p}(t)
=\frac 1{(q-1)!}\Bigg [ \sum\limits^{q-1}_{i=0} &\gamma_i
\int_{t-q\tau}^{t-q\tau+i\tau}(t-q\tau+i\tau-s)^{q-1}\mathfrak{p}^{(q)}(s)\, \dd s\\
&-\int_{t-q\tau}^{t}(t-s)^{q-1}\mathfrak{p}^{(q)}(s)\, \dd s\Bigg ].
\end{split}
\end{equation}
Hence, we easily infer that
\begin{equation}\label{adeq7}
\|\hat{\mathfrak{p}}(t)-\mathfrak{p}(t)\|_\mathcal{H_W}
\leqslant C\tau^q\|\mathfrak{p}^{(q)}\|_{L^{\infty}((-q\tau,T);\mathcal{H_W})}.
\end{equation}
Integrating over $[0,t]$ in \eqref{adeq5} and  using the equivalence of norms,
%
\begin{equation}\label{norm-equiv}
\|\cdot\|_a^2\geqslant{c_a}\|\cdot\|_\mathcal{V}^2,\quad
\|\cdot\|_b^2\geqslant{c_b}\|\cdot\|_\mathcal{W}^2, \quad
\|\cdot\|_c^2\geqslant{c_c}\|\cdot\|_\mathcal{H_W}^2,
\end{equation}
we see that 
\begin{equation}\label{adeq8}
\begin{split}
&\|e_u(t)\|_\mathcal{V}^2
+\|e_p(t)\|_\mathcal{H_W}^2+\int_0^t\|e_p\|_\mathcal{W}^2\,\dd s\\
&\leqslant C\int_0^t\|e_{u,t}\|^2_\mathcal{V}\,\dd s
+C\tau^{2q}\|\mathfrak{p}^{(q)}\|^2_{L^{\infty}((-q\tau,T);\mathcal{H_W})}.
\end{split}
\end{equation}

Differentiation of the first equation in \eqref{adeq4} with respect to time yields
\begin{equation}\label{adeq9}
\begin{split}
a(e_{u,t},v)-d(v,e_{p,t})&=d(v,\hat{\mathfrak{p}}_t-\mathfrak{p}_t),\\
d(e_{u,t},w)+c(e_{p,t},w)+b(e_p,w)&=0,
\end{split}
\end{equation}
for all test functions $v\in\mathcal{V}$, $w\in\mathcal{W}$.
Taking $v=e_{u,t}$ and $w=e_{p,t}$ in \eqref{adeq9}, we have
\begin{equation}\label{adeq10}
\|e_{u,t}\|_a^2
+\|e_{p,t}\|_c^2+\frac{1}{2}\frac{\dd}{\dd t}\|e_p\|_b^2
\leqslant \frac{c_a}{2}\|e_{u,t}\|^2_\mathcal{V}
+\frac{C_d^2}{2c_a}\|\hat{\mathfrak{p}}_t-\mathfrak{p}_t\|^2_\mathcal{H_W}.
\end{equation}
In analogy to  \eqref{adeq7}, by replacing  $\mathfrak{p}$ by  $\mathfrak{p}_t$ in \eqref{adeq6}, 
we also have
\begin{equation*}
\|\hat{\mathfrak{p}}_t-\mathfrak{p}_t\|_\mathcal{H_W}
\leqslant C\tau^q\|\mathfrak{p}^{(q+1)}\|_{L^{\infty}((-q\tau,T);\mathcal{H_W})}.
\end{equation*}
Integrating over $[0,t]$ in \eqref{adeq10} and  using the equivalence of norms \eqref{norm-equiv},
we get
\begin{equation}\label{adeq12}
\int_0^t\|e_{u,t}\|_\mathcal{V}^2\, \dd s
+\int_0^t\|e_{p,t}\|_\mathcal{H_W}^2\, \dd s+\|e_p\|_\mathcal{W}^2
\leqslant C\tau^{2q}\|\mathfrak{p}^{(q+1)}\|^2_{L^{\infty}((-q\tau,T);\mathcal{H_W})}.
\end{equation}
Combining \eqref{adeq8} and \eqref{adeq12}, and recalling that $\mathfrak{p}|_{[-q\tau,0]}=\varPhi$, we obtain
\begin{equation}\label{adeq13}
\begin{split}
&\|e_u\|^2_\mathcal{V}+\|e_p\|_\mathcal{H_W}^2+\|e_p\|_\mathcal{W}^2\\
&\leqslant C\tau^{2q}\Big(\|\mathfrak{p}^{(q)}\|^2_{L^{\infty}((-q\tau,T);\mathcal{H_W})}
+\|\mathfrak{p}^{(q+1)}\|^2_{L^{\infty}((-q\tau,T);\mathcal{H_W})}\Big)\\
&\leqslant C\tau^{2q}\Big(\|\mathfrak{p}^{(q)}\|^2_{L^{\infty}((0,T);\mathcal{H_W})}
+\|\mathfrak{p}^{(q+1)}\|^2_{L^{\infty}((0,T);\mathcal{H_W})}\\
&\quad\qquad+\|\varPhi^{(q)}\|^2_{L^{\infty}((-q\tau,0);\mathcal{H_W})}
+\|\varPhi^{(q+1)}\|^2_{L^{\infty}((-q\tau,0);\mathcal{H_W})}\Big).
\end{split}
\end{equation}
Proceeding along the lines of the proof of \cite[Proposition A.4]{AMU:21},
we see that $\mathfrak{p}$ and $\varPhi$ are smooth and 
their derivatives on the right-hand side of \eqref{adeq13} are bounded.
\end{proof}

\section{Consistency}\label{Se:Conc}
With $p_\star^\ell:=p(t_\ell)$ and $\mathfrak{p}_\star^\ell:=\mathfrak{p}(t_\ell)$ the nodal values of the solutions $p$ and $\mathfrak{p}$, 
 the consistency error $d^n$ of the fully implicit scheme  \eqref{full-a} for the solution $p$ of the original system \eqref{parabolic}, 
 and the consistency error $\tilde d^n$  of the implicit--explicit scheme  \eqref{abg3-a} for the solution $\mathfrak{p}$ 
 of the delay equation \eqref{adeq144},
  i.e., the amounts 
by which the exact solutions miss satisfying \eqref{full-a} and \eqref{abg3-a}, respectively, are given by 
\begin{equation}
\label{full-a-cons}
d^n:=(\mathcal{M}+\mathcal{C}){\dot p}_\star^n+ \B p_\star^n- g^n+\D\A^{-1}{\dot f}^n,\quad  n=q,\dotsc,N,
\end{equation}
and
\begin{equation}
\label{abg3-a-cons}
{\tilde d}^n:=\mathcal{C} \dot{\mathfrak{p}}_\star^{n}+\M\sum\limits^{q-1}_{j=0}\gamma_j
\dot{\mathfrak{p}}_\star^{n-q+j}
+ \B \mathfrak{p}_\star^n- g^n+\D\A^{-1}{\dot f}^{n},\quad  n=q,\dotsc,N.
\end{equation}

For convenience, we  introduce the quantities
\begin{equation}\label{defects}
\begin{alignedat}{2}
\eta^\ell &:={\dot p}_\star^\ell-p_t(t_\ell),\quad &&\ell=q,\dotsc,N,
\quad \xi^\ell :={\dot f}^\ell-f_t(t_\ell),\quad \ell=q,\dotsc,N,\\
\vartheta^\ell &:=\dot{\mathfrak{p}}_\star^\ell-\mathfrak{p}_t(t_\ell),
\quad &&\ell=0,\dotsc,N.
\end{alignedat}
\end{equation}
Notice that $\eta^\ell, \vartheta^\ell$, and $\xi^\ell$ are the defects of the discrete time derivatives,
for $p, \mathfrak{p}$, and $f,$ respectively. The  quantity $\vartheta^\ell$ will enter only 
in the consistency error of the implicit--explicit method.

With this notation, using the differential equations \eqref{parabolic} and \eqref{adeq144}, respectively, 
we rewrite  the consistency errors $d^n$ and $\tilde d^n$  in the form
\begin{equation}
\label{full-a-cons1}
d^n=(\mathcal{M}+\mathcal{C})\eta^n+\D\A^{-1}\xi^n,\quad  n=q,\dotsc,N,
\end{equation}
and
\begin{equation}
\label{abg3-a-cons1}
{\tilde d}^n=\mathcal{C}\vartheta^n +\M\sum\limits^{q-1}_{j=0}\gamma_j\vartheta^{n-q+j}
+\D\A^{-1}\xi^n,\quad  n=q,\dotsc,N.
\end{equation}

\begin{lemma}[Consistency estimates]\label{Le:cons}
The consistency errors \eqref{full-a-cons1} and \eqref{abg3-a-cons1} are bounded by
\begin{equation}\label{cons-err-est}
\max_{q\leqslant n\leqslant N}\|d^n\|_{\mathcal{W}'} \leqslant C\tau^q, \quad 
\max_{q\leqslant n\leqslant N}\|\tilde d^n\|_{\mathcal{W}'} \leqslant \tilde C\tau^q,
\end{equation}
provided that the solutions $p$, $\mathfrak{p},$ and the forcing term $f$ are sufficiently regular.
\end{lemma}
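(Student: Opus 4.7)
The plan is to reduce both consistency estimates to three basic defect bounds, namely $\|\eta^n\|$, $\|\vartheta^n\|$, and $\|\xi^n\|$, each measured in an appropriate Bochner norm, and then to transfer these bounds to $\mathcal{W}'$ via the continuity of the operators $\mathcal{C}$, $\mathcal{M}=\mathcal{D}\mathcal{A}^{-1}\mathcal{D}^\star$, and $\mathcal{D}\mathcal{A}^{-1}$. The structural observation is that the whole lemma is an exercise in the fact that the $q$-step BDF differentiation formula \eqref{discrete-deriv} has order $q$, combined with the routine boundedness estimates already collected in Section \ref{Se:Poro}.

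First I would recall that $(\alpha,\beta)$ has order $q$, which is equivalent to the moment conditions $\sum_{i=0}^q \alpha_i = 0$ and $\sum_{i=0}^q i^{\ell}\alpha_i = \ell\, q^{\ell-1}$ for $\ell=1,\dotsc,q$. Expanding $p(t_{n-q+i})$ about $t_n$ by Taylor's formula with integral remainder and using these moment relations, the leading terms of order up to $q$ cancel, yielding a Peano kernel representation
\begin{equation*}
\eta^n \;=\; \frac{1}{\tau}\sum_{i=0}^q \alpha_i \int_{t_n}^{t_{n-q+i}} \frac{(t_{n-q+i}-s)^{q}}{q!}\, p^{(q+1)}(s)\,\dd s,
\end{equation*}
and similarly for $\vartheta^n$ and $\xi^n$. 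Taking $\mathcal{H_W}$-norms (resp.\ $\mathcal{V}'$-norm for $\xi^n$) inside the integral and using $|t_n - t_{n-q+i}|\leqslant q\tau$ immediately gives
\begin{equation*}
\|\eta^n\|_{\mathcal{H_W}} \leqslant C\tau^q \|p^{(q+1)}\|_{L^\infty((0,T);\mathcal{H_W})}, \quad \|\vartheta^n\|_{\mathcal{H_W}} \leqslant C\tau^q \|\mathfrak{p}^{(q+1)}\|_{L^\infty((-q\tau,T);\mathcal{H_W})},
\end{equation*}
and an analogous bound for $\|\xi^n\|_{\mathcal{V}'}$ in terms of $f^{(q+1)}$.

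Next I would convert these bounds into $\mathcal{W}'$-bounds using the operators. From Section \ref{SSe:3.3} the operator $\mathcal{M}=\mathcal{D}\mathcal{A}^{-1}\mathcal{D}^\star$ is bounded on $\mathcal{H_W}$ (with norm controlled by $\omega c_c$), and $\mathcal{C}$ is bounded on $\mathcal{H_W}$ by $C_c$; composing with the continuous embedding $\mathcal{H_W}\hookrightarrow \mathcal{W}'$, both $(\mathcal{M}+\mathcal{C})\eta^n$ and $\mathcal{C}\vartheta^n + \mathcal{M}\sum_{j=0}^{q-1}\gamma_j\vartheta^{n-q+j}$ are estimated in $\mathcal{W}'$ by a constant times $\tau^q$. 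For the $\mathcal{D}\mathcal{A}^{-1}\xi^n$ contribution, $\mathcal{A}^{-1}:\mathcal{V}'\to\mathcal{V}$ is bounded by $1/c_a$ and $\mathcal{D}:\mathcal{V}\to\mathcal{H_W}\hookrightarrow\mathcal{W}'$ is bounded by $C_d$, so $\|\mathcal{D}\mathcal{A}^{-1}\xi^n\|_{\mathcal{W}'}\leqslant C\tau^q$ as well. Summing these contributions in \eqref{full-a-cons1} and \eqref{abg3-a-cons1} and taking the maximum over $n$ gives the two estimates in \eqref{cons-err-est}.

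There is no real obstacle: the only points that need care are (i) writing out the Peano kernel so that the integration domain stays inside $(-q\tau,T)$ and hence the required regularity assumption is just $p^{(q+1)}\in L^\infty((0,T);\mathcal{H_W})$, $\mathfrak{p}^{(q+1)}\in L^\infty((-q\tau,T);\mathcal{H_W})$ (which is available from Proposition \ref{proposition:5.1}), and $f^{(q+1)}\in L^\infty((0,T);\mathcal{V}')$; and (ii) absorbing the constants $|\gamma_j|$ (finite in number) into $C$ so that the extrapolation-type sum $\sum_{j=0}^{q-1}\gamma_j \vartheta^{n-q+j}$ does not spoil the order. Everything else is linearity and the triangle inequality, so the two displayed bounds follow directly.
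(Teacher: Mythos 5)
Your proof is correct and follows essentially the same route as the paper: Taylor expansion with the order-$q$ moment conditions of the BDF coefficients to obtain a Peano-kernel representation of the defects $\eta^n$, $\vartheta^n$, $\xi^n$, followed by the boundedness of $\mathcal{C}$, $\mathcal{M}=\mathcal{D}\mathcal{A}^{-1}\mathcal{D}^\star$, and $\mathcal{D}\mathcal{A}^{-1}$ together with the embedding $\mathcal{H_W}\hookrightarrow\mathcal{W}'$. The only cosmetic difference is that you expand about $t_n$ (yielding a single sum of integrals) whereas the paper expands about $t_{n-q}$ (yielding a sum plus an extra integral term), which in turn is why the paper first restricts to $n\geqslant 2q$ and then treats $\tilde d^q,\dotsc,\tilde d^{2q-1}$ separately by re-centering at $t_{-q},\dotsc,t_{-1}$; your choice makes the treatment of all indices uniform since the integration domain lies in $[-q\tau,T]$ automatically, which is a minor simplification but not a genuinely different argument.
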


\begin{proof}
The order of the $q$-step implicit method $(\alpha,\beta)$ is $q,$ i.e.,
\begin{equation}
\label{order-new}
\sum_{i=0}^qi^\ell \alpha_i=\ell q^{\ell-1},\quad \ell=0,1,\dotsc,q.  
\end{equation}
%
Therefore, by Taylor expanding about $t_{n-q}\geqslant0$, we see that
leading  terms of order up to $q-1$ cancel, and we obtain 
\begin{equation}\label{cons-eta}
\eta^{n}=\frac 1{q!}\Bigg [ \frac 1\tau \sum\limits^q_{i=0} \alpha_i
\int_{t_{n-q}}^{t_{n-q+i}}(t_{n-q+i}-s)^qp^{(q+1)}(s)\, \dd s-q\int_{t_{n-q}}^{t_{n}}(t_{n}-s)^{q-1}p^{(q+1)}(s)\, \dd s\Bigg ]
\end{equation}
and the corresponding relations for $\vartheta^n$ and $\xi^n$ with $p$ replaced by $\mathfrak{p}$ and $f,$ respectively.
We infer that the first estimate in \eqref{cons-err-est} for all $n$ as well as the  second estimate in \eqref{cons-err-est} 
for $n=2q,\dotsc,N,$ are valid.

For ${\tilde d}^q,\dotsc, {\tilde d}^{2q-1}$ in \eqref{abg3-a-cons1}, we need to estimate $\vartheta^0,\dotsc,\vartheta^{q-1}$.
Since $\mathfrak{p}$ is well defined in the interval $[-q\tau,T]$, Taylor expansion yields 
\begin{equation}\label{cons-err-est-neg}
\max_{q\leqslant n\leqslant 2q-1}\|\tilde d^n\|_{\mathcal{W}'} \leqslant \tilde C\tau^q.
\end{equation}
%
In fact, \eqref{cons-err-est-neg} can be seen as follows. From \eqref{defects} and \eqref{discrete-deriv}, we obtain
\begin{equation*}
\vartheta^0=\dot{\mathfrak{p}}_\star^0-\mathfrak{p}_t(t_0)
=\dot{\mathfrak{p}}(t_0)-\mathfrak{p}_t(t_0)
=\frac1\tau\sum\limits^q_{i=0}\alpha_i\mathfrak{p}(t_{-q+i})-\mathfrak{p}_t(t_0).
\end{equation*}
By Taylor expanding about $t_{-q}$, we see that, in analogy to \eqref{cons-eta},
%
\begin{equation*}
\vartheta^{0}=\frac 1{q!}\Bigg [ \frac 1\tau \sum\limits^q_{i=0} \alpha_i
\int_{t_{-q}}^{t_{-q+i}}(t_{-q+i}-s)^q\mathfrak{p}^{(q+1)} (s)\, \dd s
-q\int_{t_{-q}}^{t_{0}}(t_{0}-s)^{q-1}\mathfrak{p}^{(q+1)}(s)\, \dd s\Bigg ].
\end{equation*}
Hence, we have
$\|\vartheta^{0}\|_{\mathcal{W}'}
\leqslant C\tau^q\|\mathfrak{p}^{(q+1)}\|_{L^{\infty}((-q\tau,0);\mathcal{H_W})}.$
Similarly,  $\vartheta^1,\dotsc,\vartheta^{q-1}$ can be estimated by Taylor expanding about $t_{-q+1},\dotsc,t_{-1}$, respectively.
Combining the estimates for $\xi^q,$ $\vartheta^0,\dotsc,\vartheta^{q}$, we obtain the estimate for ${\tilde d}^q$.
Also, ${\tilde d}^{q+1},\dotsc, {\tilde d}^{2q-1}$ can be estimated similarly as ${\tilde d}^q$,
and \eqref{cons-err-est-neg} is proved.
%
\end{proof}


\section{Convergence analysis}\label{Se:Conv}
Before we proceed, we recall the notion of the generating function of
an $n\times n$ Toeplitz  matrix $T_n$ as well as an auxiliary result, the Grenander--Szeg\H{o} theorem,
which plays a key role in our analysis.

\begin{definition}[{\cite[p.\ 13]{Chan:07}}; the generating function of a Toeplitz matrix]\label{De:gen-funct}
{\upshape Consider the $n \times n$ Toeplitz  matrix  
\[T_n=(t_{ij})_{i,j=1,\dotsc,n}\in \Co^{n,n}\]
with diagonal entries $t_0,$ subdiagonal entries
$t_1,$ superdiagonal entries $t_{-1},$ and so on, and $(n,1)$ and $(1,n)$ entries
$t_{n-1}$ and   $t_{1-n}$, respectively, i.e., the entries $t_{ij}=t_{i-j}, i,j=1,\dotsc,n,$ are constant along the diagonals of $T_n.$
 Let   $t_{-n+1},\dotsc, t_{n-1}$ be the Fourier coefficients of the trigonometric polynomial $g$
 of degree up to $n-1$, i.e.,
\begin{equation*}
  t_k=\frac{1}{2\pi}\int_{-\pi}^{\pi}g(x)\e^{-\ii kx}\, \mathrm{d} x,\quad k=1-n,\dotsc,n-1.
\end{equation*}
Then, $g(x)=\sum_{k=1-n}^{n-1} t_k\e^{\ii kx}$ is called  \emph{generating function} of $T_n$.}
\end{definition}

If the generating function $g$ is real-valued, then the matrix $T_n$ is Hermitian; if $g$ is real-valued and even,
then  $T_n$ is symmetric.


\begin{lemma}[{\cite[pp.\ 13--14]{Chan:07}};  Grenander--Szeg\H{o} theorem]\label{Le:GS}
Let $T_n$ be a symmetric Toeplitz  matrix 
with generating function $g$.
Then, the smallest and largest eigenvalues  $\lambda_{\min}(T_n)$ and $\lambda_{\max}(T_n)$, respectively, of $T_n$
are bounded as follows
\begin{equation*}
  g_{\min} \leqslant \lambda_{\min}(T_n) \leqslant \lambda_{\max}(T_n) \leqslant g_{\max},
\end{equation*}
with $g_{\min}$ and  $g_{\max}$  the minimum and maximum of $g$, respectively.
In particular, if  $g_{\min}$ is positive, then the symmetric matrix $T_n$ is positive
definite.
\end{lemma}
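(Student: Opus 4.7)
The plan is to combine the Rayleigh quotient characterization of the extremal eigenvalues of a Hermitian matrix with a Fourier/Parseval reduction of the quadratic form $v^{*}T_n v$; this is the classical route through Szeg\H{o}'s identity and requires no new tools.

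First, for an arbitrary $v=(v_1,\dotsc,v_n)^{\top}\in\Co^{n}$, I would substitute the integral representation of the entries $t_{i-j}$ from Definition \ref{De:gen-funct} into $v^{*}T_n v=\sum_{i,j=1}^{n}\bar v_i\, t_{i-j}\, v_j$ and interchange the finite double sum with the integral to obtain
\begin{equation*}
v^{*}T_n v=\frac{1}{2\pi}\int_{-\pi}^{\pi} g(x)\,\Big|\sum_{j=1}^{n} v_j\,\e^{\ii j x}\Big|^{2}\, \dd x,
\end{equation*}
the factorization inside the integral being the trivial identity $\sum_{i,j}\bar v_i v_j\e^{-\ii(i-j)x}=\big|\sum_j v_j\e^{\ii j x}\big|^{2}$. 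Setting $p_v(x):=\sum_{j=1}^{n} v_j\e^{\ii j x}$ and invoking Parseval's identity (the exponentials $\e^{\ii j x}$ are orthonormal with respect to the normalized measure $\frac{1}{2\pi}\dd x$) yields
\begin{equation*}
\frac{1}{2\pi}\int_{-\pi}^{\pi} |p_v(x)|^{2}\, \dd x=\sum_{j=1}^{n}|v_j|^{2}=\|v\|^{2}.
\end{equation*}
Sandwiching $g(x)|p_v(x)|^{2}$ between $g_{\min}|p_v(x)|^{2}$ and $g_{\max}|p_v(x)|^{2}$ pointwise and integrating then gives $g_{\min}\|v\|^{2}\leqslant v^{*}T_n v\leqslant g_{\max}\|v\|^{2}$ for every $v\in\Co^{n}$.

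The conclusion is immediate from the Rayleigh--Courant--Fischer characterization: $\lambda_{\min}(T_n)$ and $\lambda_{\max}(T_n)$ are respectively the infimum and the supremum of $v^{*}T_n v/\|v\|^{2}$ over nonzero $v$, so the preceding sandwich yields $g_{\min}\leqslant\lambda_{\min}(T_n)\leqslant\lambda_{\max}(T_n)\leqslant g_{\max}$. Moreover, when $g_{\min}>0$ we have $v^{*}T_n v\geqslant g_{\min}\|v\|^{2}>0$ for every nonzero $v$, establishing positive definiteness. I do not anticipate any real obstacle; the only point worth verifying carefully is the index bookkeeping so that the trigonometric polynomial $p_v$ is built from exactly $n$ exponentials that are orthonormal in $L^{2}(-\pi,\pi;\tfrac{1}{2\pi}\dd x)$, which is precisely what makes Parseval's normalization exact.
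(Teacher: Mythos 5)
Your proof is correct and is the standard argument for the Grenander--Szeg\H{o} bound (Rayleigh quotient plus the integral factorization $v^{*}T_nv=\frac{1}{2\pi}\int_{-\pi}^{\pi}g(x)\,\big|\sum_j v_j\e^{\ii jx}\big|^2\,\dd x$ and Parseval). The paper does not prove this lemma itself — it simply cites \cite[pp.~13--14]{Chan:07}, and the proof in that reference is precisely the one you give, so your approach coincides with the source the paper relies on.
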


\subsection{Convergence of the fully implicit schemes}
Here, we derive error estimates for the fully implicit schemes by the energy technique.
We shall use the multipliers \eqref{mu},  \eqref{mu5}, and  \eqref{mu4} for the
six-, five-, and four-step schemes, respectively.  In contrast to the case of 
 the implicit--explicit schemes, for the fully implicit schemes we could have alternatively
 used the multipliers of \cite{NO} or \cite{AK:16} for the four- and five-step methods,
 and the multiplier of \cite{ACYZ:21} for the six-step method.

\begin{theorem}[Error estimates for fully implicit schemes]\label{theorem:fully}
Let $p(t_n)$ be the nodal values of the solution $p$ of the original system \eqref{parabolic} 
and $p^n$ satisfy the fully implicit $q$-step BDF scheme \eqref{full}, $q=4,5,6$. 
For sufficiently accurate  starting approximations $p^0,\dotsc,p^{q-1},$  such that
%
\begin{equation}\label{start-acc}
\|p(t_j)-p^j\|_{\mathcal{H_W}}\leqslant C\tau^q,\quad j=0,\dotsc,q-1,
\end{equation}
and
\begin{equation}\label{start-acc6}
\tau^{1/2}\|p(t_j)-p^j\|_{\mathcal{W}}\leqslant C\tau^6, \quad j=3,4,5,
\end{equation}
for the six-step method,
\begin{equation}\label{start-acc5}
\tau^{1/2}\|p(t_j)-p^j\|_{\mathcal{W}}\leqslant C\tau^5, \quad j=3,4,
\end{equation}
for the five-step method, and 
\begin{equation}\label{start-acc4}
\tau^{1/2}\|p(t_3)-p^3\|_{\mathcal{W}}\leqslant C\tau^4
\end{equation}
for the four-step method, we have the optimal order error estimate
\begin{equation}\label{fi-acc}
\|p(t_n)-p^n\|_{\mathcal{H_W}}
\leqslant C\tau^q,\quad n=q,\dotsc,N.
\end{equation}
\end{theorem}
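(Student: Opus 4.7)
The strategy is to apply the multiplier--energy technique to the error equation, using the A-/G-stability equivalence (Lemma 4.1) with the multipliers from Propositions 4.1--4.3 (in the case $m=0$, which is the fully implicit case). Writing $e^n:=p(t_n)-p^n$, subtracting the scheme \eqref{full-a} for $p^n$ from the perturbed equation \eqref{full-a-cons} for $p_\star^n$ yields the error equation
\begin{equation*}
(\mathcal{M}+\mathcal{C})\dot e^n+\mathcal{B}e^n=d^n,\quad n=q,\dotsc,N,
\end{equation*}
with $\|d^n\|_{\mathcal{W}'}\leqslant C\tau^q$ by Lemma 6.1. I would test with the multiplier combination $\tilde e^n:=e^n-\sum_{i=1}^q\mu_i e^{n-i}$, multiply by $\tau$, and sum from $n=q$ to an arbitrary $N^\star\leqslant N$.

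For the mass part, since $(\alpha,\mu)$ satisfies \eqref{A} (this is exactly the $m=0$ case inside Propositions 4.1--4.3), Dahlquist's Lemma 4.1 applied in the $\mathcal{C}$- and the $\mathcal{M}$-semi-inner products produces a positive definite symmetric $G=(g_{ij})$ such that
\begin{equation*}
\sum_{n=q}^{N^\star}\tau\,(\dot e^n,\tilde e^n)_{\mathcal{M}+\mathcal{C}}
\geqslant \|E^{N^\star}\|_{G,\mathcal{M}+\mathcal{C}}^{2}-\|E^{q-1}\|_{G,\mathcal{M}+\mathcal{C}}^{2},
\end{equation*}
where $E^n=(e^{n-q+1},\dotsc,e^n)$. (The fact that $\mathcal{M}$ is only positive semi-definite is harmless since $\mathcal{C}$ is strictly positive definite on $\mathcal{H_W}$.) For the stiffness part, I would rewrite $\sum_n\tau\,b(e^n,\tilde e^n)$ as a Toeplitz--type quadratic form in $(e^q,\dotsc,e^{N^\star})$ plus boundary terms carrying the starting values. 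The symmetrised Toeplitz matrix has generating function $1-\mu_1\cos\varphi-\dotsb-\mu_q\cos(q\varphi)$, which by the positivity property \eqref{pos-prop} established in Propositions 4.1--4.3 (for the six-step case with the explicit lower bound $9/100$ already in the proof of Proposition 4.1) is bounded below by some $\varepsilon>0$. Grenander--Szeg\H o (Lemma 7.1) then gives
\begin{equation*}
\sum_{n=q}^{N^\star}\tau\,b(e^n,\tilde e^n)\geqslant\varepsilon\sum_{n=q}^{N^\star}\tau\|e^n\|_b^{2}-\mathcal{R},
\end{equation*}
where $\mathcal{R}$ collects the boundary contributions. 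Because the nonzero $\mu_i$ in \eqref{mu},\eqref{mu5},\eqref{mu4} reach back at most $3$ steps, $\mathcal{R}$ involves only $\|e^{j}\|_b$, $j=q-3,\dotsc,q-1$, and these are controlled precisely by \eqref{start-acc6}/\eqref{start-acc5}/\eqref{start-acc4} (the factor $\tau^{1/2}$ being exactly what Young's inequality needs against a $\tau\|e^j\|_b^2$ contribution).

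The consistency right-hand side is handled by the duality estimate
\begin{equation*}
\sum_{n=q}^{N^\star}\tau\,(d^n,\tilde e^n)\leqslant\frac{\varepsilon}{2}\sum_{n=q}^{N^\star}\tau\|e^n\|_b^{2}+C\sum_{n=q}^{N^\star}\tau\|d^n\|_{\mathcal{W}'}^{2}\leqslant\frac{\varepsilon}{2}\sum_{n=q}^{N^\star}\tau\|e^n\|_b^{2}+C\,T\tau^{2q},
\end{equation*}
where the extra $\sum_i|\mu_i|\|e^{n-i}\|_{\mathcal{W}}$ terms in $\tilde e^n$ are absorbed similarly, again with boundary contributions bounded by \eqref{start-acc6}/\eqref{start-acc5}/\eqref{start-acc4}. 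Combining the three estimates and invoking \eqref{start-acc} to control $\|E^{q-1}\|_{G,\mathcal{M}+\mathcal{C}}^{2}\leqslant C\tau^{2q}$, I would arrive at $\|E^{N^\star}\|_{G,\mathcal{M}+\mathcal{C}}^{2}\leqslant C\tau^{2q}$ uniformly in $N^\star$. Equivalence of norms and the positive definiteness of $G$ on the $\mathcal{C}$-part of the inner product then give $\|e^{N^\star}\|_{\mathcal{H_W}}\leqslant C\tau^q$, which is \eqref{fi-acc}.

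The main technical obstacle is the bookkeeping of the boundary terms in the stiffness identity: one has to verify that every starting-value contribution that survives the Toeplitz reorganisation really fits into the strong $\tau^{1/2}\|\cdot\|_\mathcal{W}$ control given by \eqref{start-acc6}--\eqref{start-acc4}, and that this is consistent with the specific support of the multipliers $\mu_1,\mu_2,\mu_3$ (the later $\mu_i$ vanish, which is precisely why only $j\geqslant q-3$ needs the stronger assumption). A secondary subtlety is that the G-identity is applied to $\mathcal{M}+\mathcal{C}$ simultaneously rather than to each operator separately, which is legitimate because the $G$-matrix produced by Lemma 4.1 depends only on $(\alpha,\mu)$ and not on the underlying (semi-)inner product.
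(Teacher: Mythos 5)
Your proposal is correct and follows the same overall multiplier--energy strategy as the paper: test the error equation with $e^n-\sum_i\mu_i e^{n-i}$, invoke G-stability for the $(\mathcal{M}+\mathcal{C})$-weighted mass term, reorganise the stiffness sum into a Toeplitz quadratic form whose symmetrised generating function is controlled via the positivity property \eqref{pos-prop} and the Grenander--Szeg\H{o} theorem, and absorb the consistency and boundary terms. Your reading of why the stronger assumptions \eqref{start-acc6}--\eqref{start-acc4} are needed only for $j\geqslant q-3$ (the support of the multiplier) and why the $\tau^{1/2}$ weight matches the $\tau\|e^j\|_b^2$ boundary contributions is also the same as the paper's.

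The one place where you take a slightly different (and more elementary) route is the mass term. You apply the G-identity \eqref{G} of Lemma \ref{Le:Dahl} directly in the $(\mathcal{M}+\mathcal{C})$-inner product, correctly observing that the $G$-matrix depends only on $(\alpha,\mu)$ and is insensitive to the choice of inner product, and that $\mathcal{M}+\mathcal{C}$ is genuinely positive definite even though $\mathcal{M}$ alone is merely semidefinite. The paper instead performs a spectral decomposition with respect to the $c$-inner product, diagonalising $\mathcal{C}^{-1/2}\mathcal{M}\mathcal{C}^{-1/2}$ with eigenvalues $\lambda_i\in[0,\omega]$, and then applies the scalar G-identity mode by mode with weight $(\lambda_i+1)$. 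For the fully implicit case the two treatments are equivalent and yours is shorter; the paper's spectral setup is the one that generalises to the implicit--explicit Theorem \ref{theorem:ie-p}, where the $G$-matrix $G(\lambda_i)$ genuinely varies with the eigenvalue and a direct application in a single inner product is no longer possible. So the paper's choice is deliberate uniformity of technique across the two theorems rather than necessity here.
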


\begin{proof}
For concreteness, we shall present the proof for the six-step method. For  the four- and five-step methods,
the proof proceeds along the same lines; the multipliers \eqref{mu4} and  \eqref{mu5} are used for these schemes.

Let $q=6$ and $e^n:=p(t_n)-p^n$ denote the error. Multiplying the consistency relation \eqref{full-a-cons} by $\tau$
and subtracting the scheme \eqref{full}, and then testing by $e^n-\mu_1e^{n-1}-\mu_2e^{n-2}-\mu_3e^{n-3}$,
with the multiplier in \eqref{mu},  we obtain
\begin{equation}\label{error1}
\Big ( (\mathcal{M}+\mathcal{C})\sum\limits^q_{i=0}\alpha_ie^{n-q+i},e^{n}-\sum_{j=1}^3\mu_j e^{n-j}\Big )
+ \tau B_{n}=\tau D_{n}
\end{equation}
with
\begin{equation*}
B_{n}:=\Big (\mathcal{B}e^n,e^{n}-\sum_{j=1}^3\mu_j e^{n-j}\Big )\quad\text{and}\quad
D_{n}:=\Big ( d^n,e^{n}-\sum_{j=1}^3\mu_j e^{n-j}\Big ).
\end{equation*}

We now consider the spectral decomposition of the operator $\mathcal{M}$ with respect to the inner product defined by
the bilinear form $c$. This means that we consider an orthonormal basis ${\tilde v}_i=\mathcal{C}^{1/2}v_i\in\mathcal{H_W}$ 
of eigenfunctions of the compact self-adjoint operator $\mathcal{C}^{-1/2}\mathcal{M}\mathcal{C}^{-1/2}$
corresponding to eigenvalues $0\leqslant \lambda_i\leqslant \omega;$ see \cite[Thm.\ 6.11]{Brezis:11}.
Then, obviously,
\begin{equation*}
\mathcal{M}v_i=\lambda_i\mathcal{C}v_i\quad\text{and}\quad
 (\mathcal{C}v_i,v_j)=\delta_{ij}.
\end{equation*}
With  the coefficients $\varepsilon_i^n:= (\mathcal{C}e^n,v_i)\in\Re,$ we have the orthonormal expansion  
$e^n=\sum\limits_{i=1}^\infty\varepsilon_i^n v_i$ and the Parseval relation
\begin{equation}\label{Parseval}
\|e^n\|_c^2=\sum_{i=1}\limits^\infty(\varepsilon_i^n)^2.
\end{equation}

Let ${\widetilde G}=(\tilde g_{jk})\in\Re^{q,q}$ be the positive definite symmetric matrix for the (implicit, with $m=0$) six-step BDF method
associated to the multiplier \eqref{mu} and denote by $\|\cdot\|_{\widetilde G}=({\widetilde G}\cdot,\cdot)^{1/2}$
the induced norm on $\Re^{q}$. Then, with $E_i^n:=\big (\varepsilon_i^{n-q+1},\dotsc,\varepsilon_i^{n}\big)^{\top}\in\Re^{q}$,
we have
\begin{equation*}
\left\|E_i^n\right\|_{\widetilde G}^2
=\sum_{j,k=1}^{q}{\tilde g}_{jk} \varepsilon_i^{n-q+j}\varepsilon_i^{n-q+k},
\end{equation*} 
%
%
and the first term on the left-hand side of \eqref{error1} can be estimated from below in the form
\begin{equation*}
\begin{split}
&\Big ( (\mathcal{M}+\mathcal{C})\sum\limits^q_{\ell=0}\alpha_\ell e^{n-q+\ell},e^{n}-\sum_{j=1}^3\mu_j e^{n-j}\Big )\\
&=\sum_{i=1}^\infty(\lambda_i+1)\Big (\sum\limits^q_{\ell=0}\alpha_\ell \varepsilon_i^{n-q+\ell}\Big ) \Big (\varepsilon_i^{n}-\sum_{j=1}^3\mu_j \varepsilon_i^{n-j}\Big )\\
&\geqslant\sum_{i=1}^\infty(\lambda_i+1)\big (\|E_i^n\|^2_{{\widetilde G}}-\|E_i^{n-1}\|^2_{{\widetilde G}}\big ).
\end{split}
\end{equation*}

With the norm $\|\cdot\|_{\lambda},$
\begin{equation}\label{norm-lambda}
\|E^m\|^2_{\lambda}=\sum_{i=1}^\infty(\lambda_i+1)\|E_i^m\|^2_{{\widetilde G}},
\end{equation}
the previous estimate takes the form
\begin{equation}\label{new-estimate}
\Big ( (\mathcal{M}+\mathcal{C})\sum\limits^q_{\ell=0}\alpha_\ell e^{n-q+\ell},e^{n}-\sum_{j=1}^3\mu_j e^{n-j}\Big )
\geqslant \|E^n\|^2_{\lambda}-\|E^{n-1}\|^2_{\lambda}.
\end{equation}
%
In view of \eqref{new-estimate}, \eqref{error1} yields
$\|E^n\|^2_{\lambda}-\|E^{n-1}\|^2_{\lambda}+\tau B_{n}\leqslant \tau D_{n}.$
Summing here over $n$ from $n=q$ to $n=m$,  we obtain
\begin{equation}\label{2.888}
\|E^m\|^2_{\lambda}-\|E^{q-1}\|^2_{\lambda}+\tau \sum_{n=q}^mB_n\leqslant \tau \sum_{n=q}^mD_n.
\end{equation}
The sum on the right-hand side can be easily estimated by the generalized Cauchy--Schwarz inequality
and the arithmetic--geometric mean inequality with a suitable weight. 
Following the approach in \cite{ACYZ:21}, we next focus on
the estimation of the sum $B_q+\dotsb+B_m$ from below; we have
 \begin{equation}
\label{abg36}
\sum_{n=q}^mB_n=\sum_{n=q}^m\Big (\mathcal{B}e^n,e^{n}-\sum_{j=1}^3\mu_j e^{n-j}\Big ).
\end{equation}
First, motivated by the positivity of the function $g$ of \eqref{f}, to take advantage of the
positivity property  \eqref{pos-prop}, we introduce $\mu_0:=-91/100,$
and rewrite \eqref{abg36} as
 \begin{equation}
\label{abg37}
\sum_{n=q}^mB_n=\frac 9{100}\sum_{n=q}^m\|e^n\|_b^2+J_m\ \text{ with }\
J_m:=-\sum_{j=0}^3\mu_j \sum_{i=1}^{m-5}\left(\mathcal{B} e^{5+i},e^{5+i-j} \right).
\end{equation}

Our next task is to rewrite $J_m$ in a form that will enable us to estimate
it from below in a desired way.
To this end, we introduce the lower triangular Toeplitz matrix $L=(\ell_{ij})\in \R^{m-5,m-5}$ with entries
$\ell_{i,i-j}=-\mu_j, ~ j=0,1,2,3, ~ i=j+1,\dotsc,m-5,$
and all other entries equal zero.
With this notation, we have
\begin{equation}\label{abg38}
\begin{split}
\sum_{i,j=1}^{m-5}&\ell_{ij} (\mathcal{B} e^{5+i},e^{5+j} )=-\sum_{j=0}^3\mu_j \sum_{i=j+1}^{m-5}(\mathcal{B} e^{5+i},e^{5+i-j})\\
&=J_m+(\mathcal{B} e^6,\mu_1e^5+\mu_2e^4+\mu_3e^3)+(\mathcal{B} e^7,\mu_2e^5+\mu_3e^4)+\mu_3(\mathcal{B} e^8,e^5).\\
\end{split}
\end{equation}
At this point we shall use the positivity property \eqref{pos-prop}  to show that the term on the left-hand side
of \eqref{abg38} is nonnegative and then obtain a suitable lower bound for $J_m.$ Indeed, the symmetric part
\[L_s:=(L+L^\top)/2\]
of the matrix $L$ is a symmetric seven-diagonal Toeplitz matrix and its
 generating function $g,$ see \eqref{f}, is positive.
Hence, according to the Grenander--Szeg\H{o} theorem, see Lemma \ref{Le:GS},
the Toeplitz matrix $L_s$ is positive definite. Consequently, since
\[(Lx,x)=(L_sx,x)\quad \forall x\in \R^{m-5},\]
the matrix $L$ is also positive definite.
Therefore, the expression on the left-hand side of \eqref{abg38} is nonnegative; thus,
\eqref{abg38} yields the desired estimate for $J_m$ from below,
\begin{equation}
\label{abg39}
J_m\geqslant-(\mathcal{B} e^6,\mu_1e^5+\mu_2e^4+\mu_3e^3)
-(\mathcal{B} e^7,\mu_2e^5+\mu_3e^4)-\mu_3(\mathcal{B} e^8,e^5 ).
\end{equation}

From \eqref{2.888}, \eqref{abg37}   and \eqref{abg39},  we have
\begin{equation}\label{est-essent}
\begin{split}
\|E^m\|^2_{\lambda}&+\frac 9{100}\tau \sum_{n=q}^m\|e^n\|_b^2
\leqslant
\|E^{q-1}\|^2_{\lambda}+\tau \sum_{n=q}^mD_n\\
&+\tau(\mathcal{B} e^6,\mu_1e^5+\mu_2e^4+\mu_3e^3)+\tau (\mathcal{B} e^7,\mu_2e^5+\mu_3e^4 )+\tau\mu_3(\mathcal{B} e^8,e^5).
\end{split}
\end{equation}
Now, from  \eqref{norm-lambda} and  \eqref{Parseval}, with $c_{\widetilde G}$ and $C_{\widetilde G}$ the smallest and largest eigenvalues 
of the matrix $\widetilde G,$ we obtain
\[\|E^m\|^2_{\lambda}\geqslant \sum_{i=1}^\infty\|E_i^m\|^2_{\widetilde G}\geqslant c_{\widetilde G}\sum_{i=1}^\infty\left(\varepsilon_i^m\right)^2,\]
whence,
\begin{equation}\label{new-1}
\|E^m\|^2_{\lambda}\geqslant c_{\widetilde G}\|e^m\|_c^2,
\end{equation}
and, analogously,
\begin{equation}\label{new-2}
\|E^{q-1}\|^2_{\lambda}\leqslant (1+\omega) C_{\widetilde G}\sum_{j=0}^{q-1}\|e^j\|_c^2.
\end{equation}
%
%

Furthermore, the terms involving  the starting approximations can be estimated
by elementary inequalities in the form
\begin{equation}\label{B-estimate}
|(\mathcal{B} e^i,e^j)|\leqslant \delta_1C_b \|e^i\|_{\mathcal{W}}^2+\frac {C_b}{4\delta_1}\|e^j\|_{\mathcal{W}}^2,\quad i=6,7,8, \ \
j=3,4,5,
 \end{equation}
%
with sufficiently small $\delta_1$. 
The term $D_n$ can be estimated by the Cauchy--Schwarz and arithmetic--geometric mean  inequalities; we obtain
\begin{equation}\label{D-estimate}
|D_n|\leqslant \frac 1{\delta_2}\|d^n\|^2_{\mathcal{W}'}+\delta_2\Big (\|e^n\|_{\mathcal{W}}^2+\sum_{j=1}^3|\mu_j|^2 \|e^{n-j}\|^2_{\mathcal{W}}\Big)
\end{equation}
%
with sufficiently small $\delta_2$.

Utilizing \eqref{new-1}, \eqref{new-2}, \eqref{B-estimate}, and \eqref{D-estimate}, we infer from 
\eqref{est-essent} that
\begin{equation*}
\|e^m\|_{\mathcal{H_W}}^2+\tau \sum_{n=6}^m\|e^n\|_{\mathcal{W}}^2
\leqslant C\sum_{j=0}^5\left(\|e^j\|_{\mathcal{H_W}}^2
+\tau\|e^j\|_{\mathcal{W}}^2\right)
+C\tau\sum_{n=6}^m\|d^n\|^2_{\mathcal{W}'},
\ \ m=6,\dotsc,N.
\end{equation*}
In view of  \eqref{start-acc} and  \eqref{cons-err-est}, the expression on the right-hand side
is of order $O(\tau^{12})$ and the asserted estimate \eqref{fi-acc} for $q=6$ follows.
\end{proof}

\begin{theorem}[Error estimates for $u$]\label{theorem:fully-u}
Let $u(t_n)$ be the nodal values of the solution $u$ of the original system \eqref{weak}
and $u^n$ satisfy the fully implicit scheme  \eqref{implicit}. For sufficiently accurate starting approximations
$p^0,\dotsc,p^{q-1},$ satisfying \eqref{start-acc}--\eqref{start-acc4},
we have the error estimate
\begin{equation}\label{fi-acc-u}
\|u(t_n)-u^n\|_\mathcal{V}\leqslant C\tau^q,\quad n=q,\dotsc,N.
\end{equation}
\end{theorem}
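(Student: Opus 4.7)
The plan is to reduce the estimate for $u$ to the already-established estimate for $p$ from Theorem \ref{theorem:fully}, using the first (elliptic) equation of the scheme.

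First, I would subtract the discrete elliptic equation from the continuous one at $t_n$. The exact solution satisfies $\mathcal{A}u(t_n) - \mathcal{D}^{\star}p(t_n) = f^n$ by \eqref{operator}, while the scheme \eqref{implicit} gives $\mathcal{A}u^n - \mathcal{D}^{\star}p^n = f^n$. Subtracting, the forcing term cancels cleanly and the error equation reduces to
\begin{equation*}
\mathcal{A}\bigl(u(t_n)-u^n\bigr)=\mathcal{D}^{\star}\bigl(p(t_n)-p^n\bigr)\quad\text{in }\mathcal{V}',\qquad n=q,\dotsc,N.
\end{equation*}

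Next, I would test this identity with $v=u(t_n)-u^n\in\mathcal{V}$ and invoke the properties of the bilinear forms. On the left-hand side, coercivity of $a$ gives a lower bound $c_a\|u(t_n)-u^n\|_{\mathcal{V}}^2$. On the right-hand side, using the duality between $\mathcal{D}$ and $\mathcal{D}^{\star}$ together with continuity of $d$,
\begin{equation*}
\bigl(\mathcal{D}^{\star}(p(t_n)-p^n),u(t_n)-u^n\bigr)=d\bigl(u(t_n)-u^n,p(t_n)-p^n\bigr)\leqslant C_d\|u(t_n)-u^n\|_{\mathcal{V}}\|p(t_n)-p^n\|_{\mathcal{H_W}}.
\end{equation*}
Dividing by $\|u(t_n)-u^n\|_{\mathcal{V}}$ yields the pointwise bound $\|u(t_n)-u^n\|_{\mathcal{V}}\leqslant(C_d/c_a)\|p(t_n)-p^n\|_{\mathcal{H_W}}$.

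Finally, I would apply Theorem \ref{theorem:fully} under the stated starting-value hypotheses \eqref{start-acc}--\eqref{start-acc4} to control $\|p(t_n)-p^n\|_{\mathcal{H_W}}$ by $C\tau^q$, and combine to obtain \eqref{fi-acc-u}. There is essentially no obstacle here: because the elliptic equation is satisfied exactly at each discrete time (no time-stepping is involved in the first equation of \eqref{implicit}), the error in $u$ inherits the order of the error in $p$ with no loss, and the energy technique used for $p$ does not need to be repeated. The only delicate point to double-check is that the starting approximations $u^0,\dotsc,u^{q-1}$ defined by \eqref{starting-approx-u} automatically satisfy the analogous $O(\tau^q)$ bound by exactly the same argument, so the estimate is uniform over $n=0,\dotsc,N$.
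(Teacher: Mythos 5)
Your proof is correct and is essentially the same as the paper's: both subtract the discrete elliptic equation from the continuous one (the forcing cancels), test with $v=u(t_n)-u^n$, use coercivity of $a$ and continuity of $d$ to obtain $\|u(t_n)-u^n\|_{\mathcal{V}}\leqslant (C_d/c_a)\|p(t_n)-p^n\|_{\mathcal{H_W}}$, and then invoke Theorem~\ref{theorem:fully}. The only cosmetic difference is that you write the error identity in operator form while the paper stays in bilinear-form notation.
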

\begin{proof}
Considering the difference between the first equations of \eqref{weak} and \eqref{implicit}, we obtain
\begin{equation*}
a(u(t_n)-u^n,v)-d(v,p(t_n)-p^n)=0 \quad \forall v\in\mathcal{V}.
\end{equation*}
For the test function $v=u(t_n)-u^n$, we get
\begin{equation*}
\|u(t_n)-u^n\|_\mathcal{V}^2
\leqslant \frac{C_d}{c_a}\|u(t_n)-u^n\|_\mathcal{V}\,
\|p(t_n)-p^n\|_{\mathcal{H_W}},
\end{equation*}
whence,  
$\|u(t_n)-u^n\|_\mathcal{V} \leqslant C\|p(t_n)-p^n\|_{\mathcal{H_W}}.$
Thus, the asserted estimate \eqref{fi-acc-u} follows from \eqref{fi-acc}.
\end{proof}

\begin{remark}[On the requirements on the starting approximations]\label{RE:start-approx}
{\upshape The accuracy requirements \eqref{start-acc6},  \eqref{start-acc5}, and  \eqref{start-acc4}
on the starting approximations are actually not necessary; they are technical assumptions 
due to the energy technique. For instance,  \eqref{start-acc6} is due to the nonvanishing
 components $\mu_1, \mu_2,$ and $\mu_3$ of the multiplier in \eqref{mu}.

A combination of the Fourier and spectral stability techniques
allows us to establish the optimal order error estimate \eqref{fi-acc},
in a unified way for all BDF methods, under the milder accuracy assumption 
\eqref{start-acc}
%
%
on the starting approximations;
see \cite[Remark 7.2]{AC} and  \cite[Remark 2.1, (2.25)]{AKK}. 
For more details on the  Fourier and spectral stability technique,
see, for instance, \cite{ACM2,A1,A2} and references therein.

We employed the energy technique here since it is applicable
also to the more interesting case of the implicit--explicit BDF methods.
In contrast, the Fourier and spectral stability technique does not seem
to be applicable in this case; this is due to the fact that the time 
derivative $p_t$ is not discretized in the same way in all terms;
the coefficients of the polynomial $\alpha\in \P_q$ are used
in the discretization of part of it, namely, in the term $\mathcal{C} p_t,$  
while the coefficients of the polynomial $\tilde \alpha\in \P_{2q-1}$ are used
in the discretization of the other part of it, namely, in the term $\M p_t$;  
see the first and second terms on the left-hand side of \eqref{abg3}.}
\end{remark}

\subsection{Convergence of the implicit--explicit schemes}
We next derive error estimates for the implicit--explicit schemes by the energy technique.  
\begin{theorem}[Error estimates for  implicit--explicit schemes]\label{theorem:ie-p}
Let $\mathfrak{p}(t_n)$ be the nodal values of the solution $\mathfrak{p}$ of the delay system \eqref{adeq144}
and $p^n$ satisfy the implicit--explicit $q$-step BDF scheme \eqref{numerical}, $q=4,5,6.$
Assume that $\omega\leqslant 1/(2^q-1)$.  Then,   we have 
\begin{equation*}
\|\mathfrak{p}(t_n)-p^n\|^2_{\mathcal{H_W}}
\leqslant C\tau^{2q}
+C\sum_{j=0}^{q-1}\Big(\|\mathfrak{p}(t_j)-p^j\|_{\mathcal{H_W}}^2
+\tau\|\mathfrak{p}(t_j)-p^j\|_{\mathcal{W}}^2\Big ),\  n=q,\dotsc,N.
\end{equation*}
\end{theorem}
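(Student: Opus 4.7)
The plan is to adapt the energy argument of Theorem \ref{theorem:fully} to the implicit--explicit setting, exploiting that, thanks to \eqref{characteristic-poly}, the decoupled scheme \eqref{semi} is equivalent to the $2q$-step method \eqref{numerical} for $p^n$ with the parameter-dependent characteristic polynomial $\check{\alpha}_\lambda(\zeta):=\tilde\alpha(\zeta)+\lambda\hat\alpha(\zeta)$. Letting $e^n:=\mathfrak{p}(t_n)-p^n$, I would subtract \eqref{numerical} from its analogue for $\mathfrak{p}_\star^n$ (obtained by inserting the exact nodal values and using \eqref{abg3-a-cons1}) to obtain the error equation
\begin{equation*}
\mathcal{C}\sum_{\ell=0}^{2q}\tilde\alpha_\ell e^{n-2q+\ell}
+\M\sum_{\ell=0}^{2q-1}\hat\alpha_\ell e^{n-2q+\ell}
+\tau\B e^n=\tau\tilde d^n, \quad n=q,\dotsc,N,
\end{equation*}
and then test it by $e^n-\sum_{j=1}^q\mu_j e^{n-j}$, where $(\mu_1,\dotsc,\mu_q)$ is the uniform multiplier of Proposition \ref{pro:four}, \ref{pro:five}, or \ref{pro:six}. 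By \eqref{phat-p} and \eqref{adeq2}, the shadow errors vanish, $e^{-1}=\dotsb=e^{-q}=0$, so only the starting errors $e^0,\dotsc,e^{q-1}$ can appear on the right.

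Following the fully implicit proof, I would decompose $\M$ with respect to the $c$-inner product (cf.~\eqref{eq:est-M}), writing $\M v_i=\lambda_i\mathcal{C}v_i$ with $\lambda_i\in[0,\omega]$, and expand $e^n=\sum_i\varepsilon_i^n v_i$. In mode $i$, the parabolic part becomes $\sum_\ell(\check{\alpha}_{\lambda_i})_\ell\varepsilon_i^{n-2q+\ell}$. Because the hypothesis $\omega\leqslant 1/(2^q-1)$ keeps every $\lambda_i$ within the range covered by the relevant Proposition, the pair $(\check{\alpha}_{\lambda_i},\mu)$ satisfies the A-stability property \eqref{A} uniformly in $i$. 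Dahlquist's Lemma \ref{Le:Dahl} then supplies a positive definite symmetric matrix $G(\lambda_i)\in\R^{2q,2q}$ with
\begin{equation*}
\Big(\sum_\ell(\check{\alpha}_{\lambda_i})_\ell\varepsilon_i^{n-2q+\ell}\Big)\Big(\varepsilon_i^n-\sum_{j=1}^q\mu_j\varepsilon_i^{n-j}\Big)
\geqslant \|E_i^n\|_{G(\lambda_i)}^2-\|E_i^{n-1}\|_{G(\lambda_i)}^2,
\end{equation*}
where $E_i^n:=(\varepsilon_i^{n-2q+1},\dotsc,\varepsilon_i^n)^\top$. A continuity/compactness argument on the compact interval $[0,1/(2^q-1)]$ then gives uniform constants $c_G,C_G>0$ controlling $\|\cdot\|_{G(\lambda)}$ from above and below, so that the composite norm $\|E^m\|_\lambda^2:=\sum_i\|E_i^m\|_{G(\lambda_i)}^2$ dominates $c_G\|e^m\|_c^2$ by Parseval.

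Summing the tested identity over modes and over $n$ from $q$ to $m$ yields the basic stability inequality
\begin{equation*}
\|E^m\|_\lambda^2+\tau\sum_{n=q}^m B_n\leqslant \|E^{q-1}\|_\lambda^2+\tau\sum_{n=q}^m D_n,
\end{equation*}
with $B_n:=\bigl(\B e^n,e^n-\sum_j\mu_j e^{n-j}\bigr)$ and $D_n:=\bigl(\tilde d^n,e^n-\sum_j\mu_j e^{n-j}\bigr)$. The elliptic sum is bounded from below exactly as in the fully implicit proof, invoking the positivity property \eqref{pos-prop} satisfied by the very same $(\mu_1,\dotsc,\mu_q)$ and the Grenander--Szeg\H o Lemma \ref{Le:GS} applied to the associated Toeplitz matrix, which yields $\tau\sum_n B_n\geqslant c\tau\sum_n\|e^n\|_b^2$ up to a finite number of boundary terms involving the starting values in the $\|\cdot\|_\mathcal{W}$ norm. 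The data term is bounded by Cauchy--Schwarz and the arithmetic--geometric mean inequality combined with $\|\tilde d^n\|_{\mathcal{W}'}\leqslant C\tau^q$ from Lemma \ref{Le:cons}, producing the $\tau^{2q}$ contribution. Finally $\|E^{q-1}\|_\lambda^2\leqslant C_G(1+\omega)\sum_{j=0}^{q-1}\|e^j\|_c^2$, and the asserted estimate follows.

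The main obstacle is verifying the \emph{uniform} G-stability across modes, i.e., producing constants $c_G,C_G>0$ with $c_G I\preceq G(\lambda)\preceq C_G I$ for all $\lambda\in[0,1/(2^q-1)]$. This requires a continuity argument for the matrix $G(\lambda)$ produced by Dahlquist's equivalence on the compact parameter interval, together with the strictness of A-stability there. A secondary subtlety is that the scheme is a $2q$-step method, so each $E_i^n$ has $2q$ entries and the shadow history $p^{-1},\dotsc,p^{-q}$ formally enters $\|E^{q-1}\|_\lambda^2$; this is reconciled by \eqref{phat-p} and \eqref{adeq2}, which identify the history function $\varPhi$ with the numerical shadow values and hence make the errors $e^{-1},\dotsc,e^{-q}$ vanish, keeping the right-hand side of the theorem in terms of only $e^0,\dotsc,e^{q-1}$.
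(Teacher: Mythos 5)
Your overall strategy matches the paper's: spectral decomposition of $\M$ in the $c$-inner product, per-mode $G$-stability via Dahlquist for the $2q$-step polynomial $\check\alpha_\lambda=\tilde\alpha+\lambda\hat\alpha$, and the same treatment of the elliptic sum $\sum_n B_n$ via the positivity property \eqref{pos-prop} and Lemma~\ref{Le:GS}. The handling of the shadow history (vanishing $e^{-1},\dotsc,e^{-q}$ via \eqref{phat-p} and \eqref{adeq2}) is also correct. However, you explicitly flag — and then leave unresolved — the one step that actually requires an idea: producing mode-independent constants $c_G,C_G$ with $c_G I\preceq G(\lambda)\preceq C_G I$ for all $\lambda\in[0,1/(2^q-1)]$.

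Your proposed resolution, ``a continuity/compactness argument for the matrix $G(\lambda)$ produced by Dahlquist's equivalence,'' does not go through as stated, because Lemma~\ref{Le:Dahl} is an existence statement and the matrix $G$ it supplies is not unique; there is no canonical continuous selection $\lambda\mapsto G(\lambda)$ that the lemma hands you. The paper sidesteps this by a simple but essential observation: the identity \eqref{G} is \emph{linear} in the polynomial $\alpha$ for fixed $\kappa=\mu$, and
$\tilde\alpha+\lambda\hat\alpha=\bigl(1-\tfrac{\lambda}{\omega^\star}\bigr)\tilde\alpha+\tfrac{\lambda}{\omega^\star}\bigl(\tilde\alpha+\omega^\star\hat\alpha\bigr)$,
$\omega^\star=1/(2^q-1)$. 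So if $\widetilde G$ and $\widehat G$ are $G$-matrices for the two extreme parameters $m=0$ and $m=\omega^\star$ (which exist by Propositions~\ref{pro:four}--\ref{pro:six} and Lemma~\ref{Le:Dahl}), then the affine interpolant
$G(\lambda)=\bigl(1-\tfrac{\lambda}{\omega^\star}\bigr)\widetilde G+\tfrac{\lambda}{\omega^\star}\widehat G$
is a valid $G$-matrix for $\check\alpha_\lambda$: the nonnegative residual $|\sum\delta_i v^i|^2$ terms combine to a nonnegative quantity that can simply be dropped in the one-sided estimate you need. Weyl's theorem on eigenvalues of sums of symmetric matrices then gives $c_{G(\lambda)}\geqslant(1-\tfrac{\lambda}{\omega^\star})c_{\widetilde G}+\tfrac{\lambda}{\omega^\star}c_{\widehat G}$ and the analogous upper bound, yielding explicit $c_\star=\min(c_{\widetilde G},c_{\widehat G})$ and $C_\star=\max(C_{\widetilde G},C_{\widehat G})$ that are uniform over $\lambda_i\in[0,\omega]$. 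This convexity-plus-Weyl argument is the missing piece; once it is inserted in place of your compactness appeal, your proof coincides with the paper's.
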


\begin{proof}
Again, for concreteness, we shall present the proof for $q=6.$ 

Let $e^n:=\mathfrak{p}(t_n)-p^n$ denote the error. Multiplying the consistency relation \eqref{abg3-a-cons}
 by $\tau$ and subtracting the scheme \eqref{numerical}, and then testing by $e^n-\mu_1e^{n-1}-\mu_2e^{n-2}-\mu_3e^{n-3}$,
with the multiplier in \eqref{mu},  we obtain
%
\begin{equation}\label{error}
\Big (\mathcal{C} \sum\limits^{2q}_{i=0}\tilde \alpha_ie^{n-2q+i}+\M\sum\limits^{2q-1}_{i=0}\hat \alpha_ie^{n-2q+i},e^{n}-\sum_{j=1}^3\mu_j e^{n-j}\Big )+ \tau B_{n}= \tau \widetilde{D}_{n}
\end{equation}
$n=q,\dotsc,N,$ with
\begin{equation*}
B_{n}:=\Big (\mathcal{B}e^n,e^{n}-\sum_{j=1}^3\mu_j e^{n-j}\Big )\quad\text{and}\quad
\widetilde{D}_{n}:=\Big ({\tilde d}^n,e^{n}-\sum_{j=1}^3\mu_j e^{n-j}\Big ).
\end{equation*}

Let $G(\lambda_i)=(g_{jk}(\lambda_i))\in\Re^{2q,2q}$ be the positive definite symmetric matrices for the 
(implicit--explicit, with $m\in[0,1/63]$) six-step BDF method, for $m=\lambda_i,$ associated to the multiplier 
\eqref{mu} and denote by $\|\cdot\|_{G(\lambda_i)}=({G(\lambda_i)}\cdot,\cdot)^{1/2}$ the induced norm on $\Re^{2q}$.
Let $\widetilde{G}=(\tilde{g}_{jk}),\widehat{G}=(\hat{g}_{jk})\in\Re^{2q,2q}$ be the positive definite symmetric matrices 
for the extreme cases $m=0$ and $m=1/63$,
respectively, for the six-step BDF method, associated to the multiplier \eqref{mu}. 
Since $\tilde \alpha+\lambda_i \hat \alpha=(1-\frac {\lambda_i} {\omega^{\star}})\tilde \alpha +
\frac {\lambda_i} {\omega^{\star}} (\tilde \alpha+\omega^{\star} \hat \alpha),$
it is easily seen that
%
\begin{equation}\label{G-matrices}
g_{jk}(\lambda_i)=\Big (1-\frac {\lambda_i} {\omega^{\star}}\Big )\tilde{g}_{jk} +\frac {\lambda_i} {\omega^{\star}} \hat{g}_{jk},\quad \omega^{\star}=1/63.
\end{equation}
Then, with $\mathcal{E}_i^n:=\big (\varepsilon_i^{n-2q+1},\ldots,\varepsilon_i^{n}\big )^{\top}\in\Re^{2q}$, we have
\begin{equation*}
\left\|\mathcal{E}_i^n\right\|_{G(\lambda_i)}^2
=\sum_{j,k=1}^{2q}g_{jk}(\lambda_i)\varepsilon_i^{n-2q+j}\varepsilon_i^{n-2q+k},
\end{equation*} 
and the first term on the left-hand side of \eqref{error} can be estimated from below in the form
\begin{equation*}
\begin{split}
&\Big (\mathcal{C} \sum\limits^{2q}_{\ell=0}\tilde \alpha_\ell e^{n-2q+\ell}+\M\sum\limits^{2q-1}_{\ell=0}\hat \alpha_\ell e^{n-2q+\ell},e^{n}-\sum_{j=1}^3\mu_j e^{n-j}\Big )\\
&=\sum_{i=1}^\infty\Big (\sum\limits^{2q}_{\ell=0}\left(\tilde \alpha_\ell+\lambda_i\hat \alpha_\ell\right)\varepsilon_i^{n-2q+\ell}\Big )\Big (\varepsilon_i^{n}
-\sum_{j=1}^3\mu_j \varepsilon_i^{n-j}\Big )\\
&\geqslant\sum_{i=1}^\infty\Big (\left\|\mathcal{E}_i^n\right\|^2_{G(\lambda_i)}-\|\mathcal{E}_i^{n-1}\|^2_{G(\lambda_i)}\Big ).
\end{split}
\end{equation*}
With the norm $\|\cdot\|_\lambda,$
\[\|\mathcal{E}^m\|_\lambda^2=\sum_{i=1}^\infty\left\|\mathcal{E}_i^m\right\|^2_{G(\lambda_i)},\]
the previous estimate and  \eqref{error} yield
\begin{equation*}\label{2.87}
\|\mathcal{E}^n\|_\lambda^2-\|\mathcal{E}^{n-1}\|_\lambda^2
+\tau B_{n}\leqslant \tau \widetilde{D}_{n}.
\end{equation*}
Summing here over $n,$ from $n=q$ to $n=m$,  we have
\begin{equation}\label{2.88}
\|\mathcal{E}^m\|_\lambda^2+\tau \sum_{n=q}^mB_n\leqslant \|\mathcal{E}^{q-1}\|_\lambda^2+\tau \sum_{n=q}^m\widetilde{D}_n.
\end{equation}

From \eqref{2.88}, \eqref{abg37},   and \eqref{abg39}, we obtain
\begin{equation*}
\begin{split}
\|\mathcal{E}^m\|_\lambda^2+\frac 9{100}\tau \sum_{n=q}^m\|e^n\|_b^2
&\leqslant \|\mathcal{E}^{q-1}\|_\lambda^2
+\tau \sum_{n=q}^m\widetilde{D}_n+\tau(\mathcal{B} e^6,\mu_1e^5+\mu_2e^4+\mu_3e^3 )\\
&+\tau (\mathcal{B} e^7,\mu_2e^5+\mu_3e^4 )+\tau\mu_3(\mathcal{B} e^8,e^5 ).
\end{split}
\end{equation*}
%

Let ($c_{G(\lambda_i)}, C_{G(\lambda_i)}$), ($c_{\widetilde{G}}, C_{\widetilde{G}}$), and ($c_{\widehat{G}}, C_{\widehat{G}}$) 
denote the smallest and largest eigenvalues of all matrices $G(\lambda_i),\widetilde{G}$, and $\widehat{G}$, respectively.
According to Weyl's theorem, \cite[Theorem 1.4]{Chan:07}, and \eqref{G-matrices}, we have
\begin{equation*}
\begin{split}
c_{G(\lambda_i)}
\geqslant\Big (1-\frac {\lambda_i} {\omega^{\star}}\Big )c_{\widetilde{G}} +\frac {\lambda_i} {\omega^{\star}}c_{\widehat{G}}
=:c_{\lambda_i},\quad
C_{G(\lambda_i)}
\leqslant\Big (1-\frac {\lambda_i} {\omega^{\star}}\Big )C_{\widetilde{G}} +\frac {\lambda_i} {\omega^{\star}}C_{\widehat{G}}
=:C_{\lambda_i}.
\end{split}
\end{equation*}
%
Now, with
$c_{\star}:= \min_{\lambda_i\in[0,\omega^{\star}]}c_{\lambda_i}$ and
$C_{\star}:= \max_{\lambda_i\in[0,\omega^{\star}]}C_{\lambda_i}$,
using \eqref{Parseval}, we obtain
\begin{equation*}
\begin{split}
\|\mathcal{E}^m\|_\lambda^2
&\geqslant\sum_{i=1}^\infty c_{G(\lambda_i)}
\left(\varepsilon_i^m\right)^2
\geqslant \min_{\lambda_i\in[0,\omega^{\star}]}c_{\lambda_i} \sum_{i=1}^\infty\left(\varepsilon_i^m\right)^2
=c_{\star}\left\|e^m\right\|_c^2,\\
 \|\mathcal{E}^{q-1}\|_\lambda^2
&\leqslant\sum_{i=1}^\infty C_{G(\lambda_i)}
\sum_{j={-q}}^{q-1}(\varepsilon_i^j)^2
\leqslant\max_{\lambda_i\in[0,\omega^{\star}]}C_{\lambda_i}
\sum_{i=1}^\infty\sum_{j={-q}}^{q-1}(\varepsilon_i^j)^2
= C_{\star}\sum_{j=0}^{q-1}\|e^j\|_c^2.
\end{split}
\end{equation*}
Notice that  $e^{-q},\dotsc,e^{-1}$ do not enter in the above estimate of $\mathcal{E}^{q-1}$ because they vanish;
see  \eqref{adeq2}.

%

Furthermore, the terms involving the starting approximations and ${\widetilde D}_n$
can be estimated as in the  proof of Theorem \ref{theorem:fully}. 
The proof is complete.
\end{proof}

\begin{theorem}[Error estimates for $\mathfrak{u}$]\label{theorem:ie-u}
Let $\mathfrak{u}(t_n)$ be the nodal values of the solution $\mathfrak{u}$ of the delay system \eqref{adeq1}
and $u^n$ satisfy the implicit--explicit $q$-step BDF scheme \eqref{explicit}, $q=4,5,6$. Assume that $\omega\leqslant1/(2^q-1)$.
Then, we have the error estimate
\begin{equation*}
\|\mathfrak{u}(t_n)-u^n\|_\mathcal{V}^2
\leqslant C\tau^{2q}
+C\sum_{j=0}^{q-1}\Big(\|\mathfrak{p}(t_j)-p^j\|_{\mathcal{H_W}}^2
+\tau\|\mathfrak{p}(t_j)-p^j\|_{\mathcal{W}}^2\Big),\ \ n=q,\dotsc,N.
\end{equation*}
\end{theorem}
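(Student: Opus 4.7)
The plan is to mimic the proof of Theorem~\ref{theorem:fully-u}, adapting it to the semi-explicit coupling present in \eqref{explicit}. Write $e_u^n := \mathfrak{u}(t_n) - u^n$ and $e^n := \mathfrak{p}(t_n) - p^n$. Evaluating the first equation of the delay system \eqref{adeq1} at $t = t_n$ with $n \geqslant q$ and subtracting the first equation of \eqref{explicit} yields
\[
a(e_u^n, v) \;=\; d\bigl(v,\, \hat{\mathfrak{p}}(t_n) - \hat{p}^n\bigr) \qquad \forall v \in \mathcal{V}.
\]
In contrast to the fully implicit case, $\mathfrak p$ and $p$ enter here through their extrapolants, so the residual reads $\hat{\mathfrak{p}}(t_n) - \hat{p}^n$ rather than $\mathfrak{p}(t_n) - p^n$.

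Testing with $v = e_u^n$ and using coercivity of $a$ together with continuity of $d$, one obtains
\[
\|e_u^n\|_{\mathcal{V}} \;\leqslant\; \frac{C_d}{c_a}\, \bigl\|\hat{\mathfrak{p}}(t_n) - \hat{p}^n\bigr\|_{\mathcal{H_W}}.
\]
The extrapolation formulas \eqref{extrapolation} and \eqref{extrapolation-delay} (the latter evaluated at $t = t_n$) then give
\[
\hat{\mathfrak{p}}(t_n) - \hat{p}^n \;=\; \sum_{i=0}^{q-1} \gamma_i\, e^{n-q+i},
\]
whence, by Cauchy--Schwarz,
\[
\|e_u^n\|_{\mathcal{V}}^2 \;\leqslant\; C \sum_{i=0}^{q-1}\|e^{n-q+i}\|_{\mathcal{H_W}}^2.
\]

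To conclude, I would split the sum according to whether the index $j := n - q + i$ satisfies $j \geqslant q$ or $j \in \{0, \dotsc, q-1\}$. Terms with $j \geqslant q$ are controlled by Theorem~\ref{theorem:ie-p}, which under the coupling condition $\omega \leqslant 1/(2^q-1)$ provides a uniform bound of the form $C\tau^{2q}$ plus exactly the starting-error contribution appearing in the present statement. Terms with $j < q$ are themselves of that same starting-error form. Adding the two gives the asserted estimate. The only point requiring a moment's care --- essentially the lone, mild, obstacle --- is that, unlike in the fully implicit case where $\|u(t_n)-u^n\|_{\mathcal{V}}$ was dominated by the pressure error at the \emph{same} time level, here the bound couples $\|\mathfrak u(t_n)-u^n\|_{\mathcal{V}}$ to the pressure errors at the $q$ preceding time levels through the extrapolation; however, since Theorem~\ref{theorem:ie-p} is uniform in $n$, this coupling is harmless and no regularity beyond what is already assumed is needed.
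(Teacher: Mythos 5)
Your argument is essentially identical to the paper's proof of Theorem~\ref{theorem:ie-u}: subtract the first equations of \eqref{adeq1} and \eqref{explicit}, test with $v=\mathfrak{u}(t_n)-u^n$, use coercivity of $a$ and boundedness of $d$ to bound $\|\mathfrak{u}(t_n)-u^n\|_{\mathcal{V}}$ by $\|\hat{\mathfrak{p}}(t_n)-\hat{p}^n\|_{\mathcal{H_W}}$, expand the extrapolated difference via \eqref{extrapolation} and \eqref{extrapolation-delay}, and finish by invoking Theorem~\ref{theorem:ie-p}. Your explicit index split $j\geqslant q$ versus $j<q$ and the use of Cauchy--Schwarz on the $\gamma_i$-weighted sum are mild, correct elaborations of what the paper leaves implicit.
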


\begin{proof}
Subtracting the  first equation of \eqref{explicit} from the first equation
of \eqref{adeq1} yields
%
\begin{equation*}
a(\mathfrak{u}(t_n)-u^n,v)-d(v,\hat{\mathfrak{p}}(t_n)-\hat{p}^n)=0 \quad \forall v\in\mathcal{V}.
\end{equation*}
For the test function $v=\mathfrak{u}(t_n)-u^n$, we get
\begin{equation*}
\|\mathfrak{u}(t_n)-u^n\|_\mathcal{V}^2
\leqslant \frac{C_d}{c_a}\|\mathfrak{u}(t_n)-u^n\|_\mathcal{V}
\|\hat{\mathfrak{p}}(t_n)-\hat{p}^n\|_{\mathcal{H_W}},
\end{equation*}
whence
\begin{equation*}
\|\mathfrak{u}(t_n)-u^n\|_\mathcal{V}
\leqslant C\|\hat{\mathfrak{p}}(t_n)-\hat{p}^n\|_{\mathcal{H_W}}
\leqslant C\sum\limits^{q-1}_{j=0}\gamma_j
\|\mathfrak{p}(t_{n-q+j})-p^{n-q+j}\|_{\mathcal{H_W}}.
\end{equation*}
Thus, the asserted estimate follows immediately from Theorem \ref{theorem:ie-p}.
\end{proof}

%

%
%
Combining Proposition \ref{proposition:5.1} and Theorems \ref{theorem:ie-p} and \ref{theorem:ie-u}, we obtain 
error estimates for the implicit--explicit schemes.

\begin{theorem}[Error estimates for implicit--explicit schemes]\label{theorem:semi}
Let $p(t_n)$ be the nodal values of the solution $p$ of the original system \eqref{parabolic} 
and $p^n$ satisfy the implicit--explicit $q$-step BDF scheme \eqref{numerical}, $q=4,5,6$.
Assume that $\omega\leqslant1/(2^q-1)$.  Then, for sufficiently accurate starting approximations
$p^0,\dotsc,p^{q-1},$ 
\begin{equation*}
\|p(t_j)-p^j\|_{\mathcal{H_W}}+\tau^{1/2}\|p(t_j)-p^j\|_{\mathcal{W}}
\leqslant C\tau^q, \quad j=0,\dotsc,q-1,
\end{equation*}
we have the optimal order error estimate
\begin{equation*}
\|u(t_n)-u^n\|_\mathcal{V}+\|p(t_n)-p^n\|_{\mathcal{H_W}}
\leqslant C\tau^q,\quad n=q,\dotsc,N.
\end{equation*}
\end{theorem}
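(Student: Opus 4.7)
The plan is to obtain the final estimate by a straightforward triangle inequality argument, chaining the three previously established results. The key observation is that Proposition \ref{proposition:5.1} controls $\|\mathfrak{p}(t)-p(t)\|_{\mathcal{H_W}}$, $\|\mathfrak{p}(t)-p(t)\|_{\mathcal{W}}$, and $\|\mathfrak{u}(t)-u(t)\|_{\mathcal{V}}$ uniformly by $C\tau^q$, so the gap between the delay problem and the original problem is of the target order $q$ at every nodal point.

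First, I would decompose the error with respect to the original system via
\begin{equation*}
\|p(t_n)-p^n\|_{\mathcal{H_W}}\leqslant \|p(t_n)-\mathfrak{p}(t_n)\|_{\mathcal{H_W}}+\|\mathfrak{p}(t_n)-p^n\|_{\mathcal{H_W}},
\end{equation*}
and analogously for $u$:
\begin{equation*}
\|u(t_n)-u^n\|_{\mathcal{V}}\leqslant \|u(t_n)-\mathfrak{u}(t_n)\|_{\mathcal{V}}+\|\mathfrak{u}(t_n)-u^n\|_{\mathcal{V}}.
\end{equation*}
The first terms on the right-hand sides are $O(\tau^q)$ by Proposition \ref{proposition:5.1}. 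The second terms are controlled by Theorems \ref{theorem:ie-p} and \ref{theorem:ie-u}, but those estimates involve the \emph{starting errors measured against the delay solution} $\mathfrak{p}$, not against $p$.

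The second step is to convert the hypothesis (starting errors against $p$) into the form required by Theorems \ref{theorem:ie-p}--\ref{theorem:ie-u}. Again by the triangle inequality,
\begin{equation*}
\|\mathfrak{p}(t_j)-p^j\|_{\mathcal{H_W}}\leqslant \|\mathfrak{p}(t_j)-p(t_j)\|_{\mathcal{H_W}}+\|p(t_j)-p^j\|_{\mathcal{H_W}}\leqslant C\tau^q
\end{equation*}
for $j=0,\dotsc,q-1$, using Proposition \ref{proposition:5.1} on the first summand and the hypothesis on the second. The same argument in the $\mathcal{W}$-norm, with the extra factor $\tau^{1/2}$, gives $\tau^{1/2}\|\mathfrak{p}(t_j)-p^j\|_{\mathcal{W}}\leqslant C\tau^q$. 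Inserting these bounds into the right-hand sides of Theorems \ref{theorem:ie-p} and \ref{theorem:ie-u} shows that $\|\mathfrak{p}(t_n)-p^n\|_{\mathcal{H_W}}$ and $\|\mathfrak{u}(t_n)-u^n\|_{\mathcal{V}}$ are both $O(\tau^q)$.

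Combining the two triangle inequalities with these bounds yields the asserted optimal order estimate, and the proof is complete. There is no genuine obstacle here; everything mechanical is pushed back into Proposition \ref{proposition:5.1} (where the delay equation smoothness and history-function compatibility \eqref{adeq2} do the real work) and into Theorems \ref{theorem:ie-p}--\ref{theorem:ie-u} (where the multiplier/$G$-matrix energy argument and the coupling condition $\omega\leqslant 1/(2^q-1)$ are used). The only point to watch is dimensional consistency of the starting-data hypothesis, i.e., that one really does have both an $\mathcal{H_W}$- and a weighted $\mathcal{W}$-bound on $p(t_j)-p^j$, which is precisely what the statement assumes.
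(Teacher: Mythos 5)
Your proposal is correct and takes exactly the approach the paper does: the paper's own ``proof'' is the one-line remark that Theorem~\ref{theorem:semi} follows by combining Proposition~\ref{proposition:5.1} with Theorems~\ref{theorem:ie-p} and~\ref{theorem:ie-u}, and your triangle-inequality chain (decomposing $p(t_n)-p^n$ through $\mathfrak{p}(t_n)$, and converting the starting-data hypothesis from $p$ to $\mathfrak{p}$) is precisely how that combination is carried out.
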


\section{Numerical results}\label{Se:numerics}
In this section, we present numerical examples to demonstrate the convergence result stated in Theorem \ref{theorem:semi} and highlight 
the necessity of imposing a weak coupling condition.
\subsection{Poroelastic example}
For the sake of brevity, we employ the implicit--explicit and fully implicit six-step BDF schemes to \eqref{system}
with  $\varOmega=(-1,1)^2, T=1$, and  homogeneous Dirichlet boundary conditions.

We numerically verified the  theoretical results including convergence orders. In  space, we discretized by the spectral collocation method  with
the Cheby\-shev--Gauss--Lobatto points.
%
%
In order to test the temporal error, we fix $N_x = N_y = 20$;
the spatial error is negligible  since the spectral collocation method converges exponentially;
see, e.g., \cite[Theorem 4.4, \textsection{4.5.2}]{STW:2011}.

\begin{example}\label{ex:6.1}
{\upshape
Here, the initial value and the forcing term were
chosen such that the exact solution of equation \eqref{system} is
\begin{equation*}
u(x,y,t)=(t^7+1)
\begin{pmatrix}
\left(\cos(\pi x)+1\right)\sin(\pi y)\\
\sin(\pi x)\left(\cos(\pi y)+1\right)
\end{pmatrix},
\quad 
p(x,y,t)=(t^7+1)\sin(\pi x)\sin(\pi y).
\end{equation*}
The poroelasticity parameters are chosen as $\eta=0.3,\mu=0.3,\lambda=0.3,M=0.1,\kappa=0.05$;
then, the coupling strength  $\omega=0.015<1/63$ satisfies the condition in  Theorem \ref{theorem:semi}.
We present in Table \ref{table:1} the errors as well as the corresponding convergence orders (rates).
%

\begin{table}[!ht]
\begin{center}
{\small
\caption{Errors and convergence orders with $\omega<1/63.$}
\begin{tabular}{|c|c|c|c|c|}\cline{1-5} 
\multicolumn{5}{|c|}{Implicit--explicit six-step scheme} \\  
\hline 
$\tau{\vphantom{\sum^{\sum^\sum}}}$ & $\|u(T)-u^N\|_{\mathcal{V}}$   &   Rate  \ \  & $\|p(T)-p^N\|_{\mathcal{H_W}}$     & Rate    \\*[2pt]
\hhline{|=====|}
  1/50&     1.2218e-07  &         &4.9606e-08  &             \\
  1/100&    1.9896e-09  &5.9404   &7.4826e-10  &6.0508      \\
  1/150&    1.7700e-10  &5.9673   &6.4951e-11  &6.0279        \\
  1/200&    3.1689e-11  &5.9795   &1.1535e-11  &6.0076          \\ 
  \hline
\multicolumn{5}{|c|}{Fully implicit six-step scheme} \\ \hline
$\tau{\vphantom{\sum^{\sum^\sum}}}$ &        $\|u(T)-u^N\|_{\mathcal{V}}$    &   Rate  \ \  & $\|p(T)-p^N\|_{\mathcal{H_W}}$     & Rate   \\*[2pt]
\hhline{|=====|}
  1/50&     1.7802e-08  &         &4.6881e-08  &            \\
  1/100&    2.6747e-10  &6.0565   &7.0436e-10  &6.0566      \\
  1/150&    2.3181e-11  &6.0318   &6.1035e-11  &6.0322        \\
  1/200&    4.1175e-12  &6.0070   &1.0825e-11  &6.0120          \\ 
  \hline
    \end{tabular}\label{table:1}
    }
  \end{center}
\end{table}

Next, we choose the poroelasticity parameters $\eta=0.6,\mu=0.6,\lambda=0.6,M=0.1,\kappa=0.05$;
then,  the coupling strength is $\omega=0.03>1/63,$ whence the condition in Theorem \ref{theorem:semi} is violated.
We present  the errors as well as the corresponding convergence orders (rates)  in Table \ref{table:2}.

\begin{table}[!ht]
\begin{center}
{\small
\caption{Errors and convergence orders with $\omega>1/63.$}
\begin{tabular}{|c|c|c|c|c|}\cline{1-5} 
\multicolumn{5}{|c|}{Implicit--explicit six-step  scheme} \\  \hline 
$\tau{\vphantom{\sum^{\sum^\sum}}}$ & $\|u(T)-u^N\|_{\mathcal{V}}$   &   Rate  \ \  & $\|p(T)-p^N\|_{\mathcal{H_W}}$     & Rate    \\*[2pt]
\hhline{|=====|}
  1/50&     1.1579e-07  &         &4.0664e-08  &             \\
  1/100&    5.8366e-09  &\ \ 4.3103   &5.6018e-10  &\ \ 6.1817      \\
  1/150&    1.8496e-08  &$-$2.8446   &1.0504e-09  &$-$1.5505        \\
  1/200&    1.1540e-07  &$-$6.3642   &6.6737e-09  &$-$6.4272          \\ \hline
\multicolumn{5}{|c|}{Fully implicit six-step scheme} \\ \hline
$\tau{\vphantom{\sum^{\sum^\sum}}}$ &        $\|u(T)-u^N\|_{\mathcal{V}}$    &   Rate  \ \  & $\|p(T)-p^N\|_{\mathcal{H_W}}$     & Rate    \\*[2pt]
\hhline{|=====|}
  1/50&     1.3567e-08  &         &3.5731e-08  &            \\
  1/100&    2.0384e-10  &6.0565   &5.3684e-10  &6.0566      \\
  1/150&    1.7666e-11  &6.0319   &4.6520e-11  &6.0321        \\
  1/200&    3.1537e-12  &5.9894   &8.2743e-12  &6.0022          \\ \hline
    \end{tabular}\label{table:2}
    }
  \end{center}
\end{table}
%
A prominent advantage of these higher-order schemes is that, with almost the  computational cost 
of first-order schemes,  \cite{AMU:21}, they greatly improve the accuracy.
Tables \ref{table:1} and  \ref{table:2} show that  the fully implicit scheme \eqref{implicit} do not require any type of coupling condition. 
The implicit--explicit scheme \eqref{explicit} attain sixth-order accuracy under the coupling condition $\omega=0.015<1/63$; conversely, 
the scheme diverges when $\omega=0.03>1/63$, which is consistent with the results presented in Theorem  \ref{theorem:semi}. 
The sharpness of the weak coupling condition is further investigated in the following subsection.}
\end{example}

\subsection{Sharpness of the weak coupling condition}
We proceed to present a numerical example aimed at verifying the requirement of the weak coupling condition stated in Lemma \ref{Le:nece-cond}. 
For this purpose, we consider the following test problem conforming to \eqref{weak}, where $\mathcal{V}=\mathcal{H_V}=\R^3, \mathcal{W}=\mathcal{H_W}=\R^1$, 
and the bilinear forms are specified as in \cite{AMU:24}:
\begin{equation*}
a(u,v)=v^{\top}Au,\quad d(v,p)=\sqrt{\omega}p^{\top}Dv,\quad
c(p,q)=q^{\top}Cp,\quad b(p,q)=q^{\top}Bp
\end{equation*}
with matrices
\begin{equation*}
A:=\frac{1}{2-\sqrt{2}}
  \begin{pmatrix*}[r]
    2 & -1 &    0 \\
     -1 &    2 & -1 \\
       0 & -1 &   2 
   \end{pmatrix*},\quad
   D:=\frac{1}{\sqrt{13(2-\sqrt{2})}}\begin{pmatrix}
   2 & 1 & 2
   \end{pmatrix},\quad
   C:=1,\quad B:=1.
\end{equation*}
The prefactor of $A$ is chosen such that the smallest eigenvalue $c_a$ of $A$ equals $1$. Additionally, with $c_c=1$ 
and the continuity constant of $d$ given by $C_d=\sqrt{\omega}$, \eqref{strength} is satisfied.
In this case, $DA^{-1}D^{\top}=1$ and $\M=\omega\C$.

We then examine our implicit--explicit scheme \eqref{explicit} using various time step sizes $\tau$ and varying coupling coefficients $\omega$.
The errors are evaluated at the final time $T=1$. As exact solution, we choose
\begin{equation*}
u(t)=\begin{pmatrix}
\sin(t)\\ \cos(t)\\ \e^t
\end{pmatrix},
\quad 
p(t)=(2t)^7+1.
\end{equation*}
The corresponding results are shown in Figure \ref{Fig:error}; we see that the critical value for stability is roughly $1/63\approx0.015873$, 
which satisfies the weak coupling condition stated in Lemma \ref{Le:nece-cond}, and demonstrates that the coupling condition is nearly sharp.
\begin{figure}[htb]  
\centering
  \begin{tabular}{cc}
      \includegraphics[width=8cm]{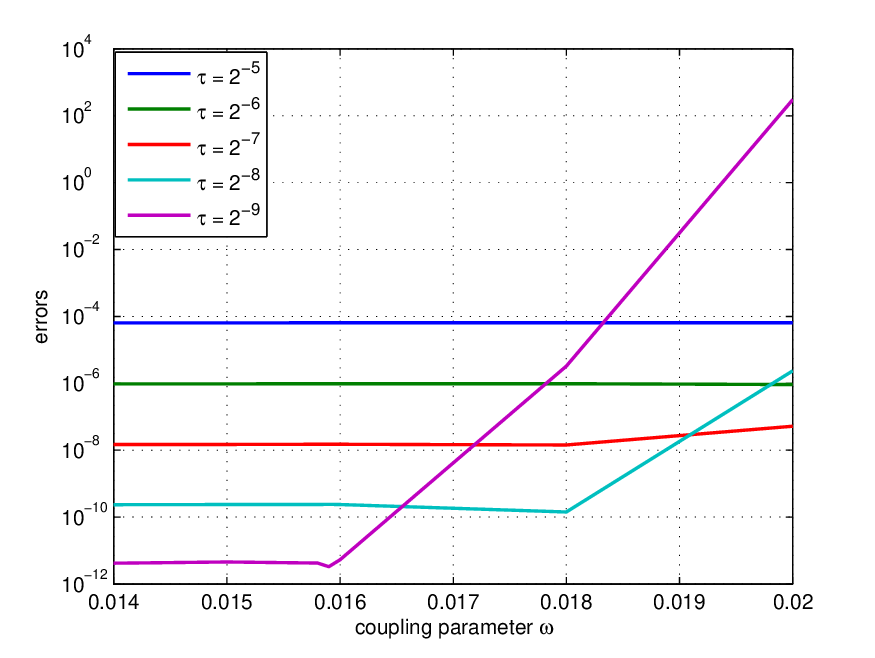}
  \end{tabular}
  \caption{Errors of the sixth-order implicit--explicit method for various
coupling parameters $\omega$ and various time step sizes $\tau$.}
  \label{Fig:error}
\end{figure}


\bibliographystyle{amsplain}

\end{document}